\newcommand*{\R}{{\mathbb R}}
\newcommand*{\N}{{\mathbb N}}
\newcommand*{\eps}{\varepsilon}
\newcommand*{\Om}{\Omega}
\newcommand{\EC}{\mathcal{E}}
\newcommand*{\pip}{\varphi}
\newcommand{\uB}{\bar{u}}
\newcommand{\xB}{\bar{x}}
\newcommand{\yB}{\bar{y}}
\newcommand{\zB}{\bar{z}}
\newcommand{\muB}{\mu_r}
\newcommand{\dB}{d_r}
\providecommand*{\vint}[1]{\mathchoice
          {\mathop{\vrule width 5pt height 3 pt depth -2.5pt
                  \kern -9pt \kern 1pt\intop}\nolimits_{\kern -5pt{#1}}}
          {\mathop{\vrule width 5pt height 3 pt depth -2.6pt
                  \kern -6pt \intop}\nolimits_{\kern -3pt{#1}}}
          {\mathop{\vrule width 5pt height 3 pt depth -2.6pt
                  \kern -6pt \intop}\nolimits_{\kern -3pt{#1}}}
          {\mathop{\vrule width 5pt height 3 pt depth -2.6pt
                  \kern -6pt \intop}\nolimits_{\kern -3pt{#1}}}}
\newcommand*{\jint}{\fint}
\DeclareMathOperator{\Lip}{Lip}
\DeclareMathOperator{\dist}{dist}
\DeclareMathOperator{\rad}{rad}
\numberwithin{equation}{section}
\theoremstyle{plain}
\newtheorem{thm}[equation]{Theorem}
\newtheorem{prop}[equation]{Proposition}
\newtheorem{lem}[equation]{Lemma}
\theoremstyle{definition}
\newtheorem{defn}[equation]{Definition}
\newtheorem{remark}[equation]{Remark}
\begin{document}

\title[Construction of Dirichlet form and solving Dirichlet problems]
{Construction of a Dirichlet form on metric measure spaces of controlled geometry} 
\author{Almaz Butaev}
\address{Department of Mathematical Sciences, P.O.~Box 210025, University of Cincinnati, Cincinnati, OH~45221-0025, U.S.A.
and
Department of Mathematics and Statistics, University of the Fraser Valley, Abbotsford, BC, V2S 7M8, Canada.
}
\email{Almaz.Butaev@ufv.ca}
\author{Liangbing Luo}
\address{Department of Mathematical Sciences, Lehigh University, Bethlehem, PA 18015, U.S.A.}
\email{lil522@lehigh.edu}
\author{Nageswari Shanmugalingam}
\address{Department of Mathematical Sciences, P.O.~Box 210025, University of Cincinnati, Cincinnati, OH~45221-0025, U.S.A.}
\email{shanmun@uc.edu}
\thanks{
 N.S.'s work is partially supported by the NSF (U.S.A.) grant DMS~\#2054960. This work was begun during the residency
 of L.L.~and N.S.~at the 
 Mathematical Sciences Research Institute (MSRI, Berkeley, CA) as members of  the program
\emph{Analysis and Geometry in Random Spaces} which is 
 supported by the National Science Foundation (NSF U.S.A.) under Grant No. 1440140, during Spring 2022. 
 They thank MSRI for its kind hospitality. The authors thank Patricia Alonso-Ruiz and Fabrice Baudoin for sharing with us their early 
 manuscript~\cite{A-RB}, which helped us
work out the nature of  the tool of Mosco convergence.}

\begin{abstract}
Given a compact doubling metric measure space $X$ that supports a $2$-Poincar\'e inequality, we construct a Dirichlet form on 
$N^{1,2}(X)$ that is comparable to the upper gradient energy form on $N^{1,2}(X)$. Our approach is based on the approximation 
of $X$ by a family of graphs that is doubling and supports a $2$-Poincar\'e inequality (see \cite{GillLopez}). 
We construct a bilinear form on $N^{1,2}(X)$ using the Dirichlet form on the graph. 
We show that the $\Gamma$-limit $\mathcal{E}$ of this family of bilinear forms (by taking a subsequence) exists and that 
$\mathcal{E}$ is a Dirichlet form on $X$. Properties of $\mathcal{E}$ are established. Moreover, we prove that $\mathcal{E}$ has 
the property of matching boundary values on a domain $\Omega\subseteq X$. This 
construction makes it possible to approximate harmonic functions (with respect to the Dirichlet form $\EC$) on a domain in $X$ with
a prescribed Lipschitz boundary data via a numerical scheme dictated by the approximating Dirichlet forms, which are discrete objects.
\end{abstract}
\maketitle
\noindent
    {\small \emph{Key words and phrases}: Dirichlet form, $N^{1,2}(X)$, graph approximation, $\Gamma$-convergence, boundary values,
    doubling spaces, Poincar\'e inequality, Lipschitz approximations.
}

\medskip

\noindent
    {\small Mathematics Subject Classification (2020):
Primary: 31C25.
Secondary: 46E35, 49Q20, 65N55.
}

\section{Introduction}

The idea of approximating a compact doubling metric space by graphs now is well-established, with the work of
Christ~\cite{ChristCubes} containing a prototype of such an idea. This idea blossomed into the construction of
hyperbolic fillings of compact doubling spaces to obtain a Gromov hyperbolic space whose boundary is
(homeomorphic to) the given space (see for instance~\cite{BonkSak, BourPaj, Carrasco, GillLopez}). Under the
additional condition that the compact doubling metric space is equipped with a doubling measure that supports a 
Poincar\'e inequality, much can be said about the approximating graphs, see~\cite{GillLopez}. In the non-smooth 
setting of metric measure spaces equipped with a doubling measure supporting a Poincar\'e inequality, 
it is now known that there is a rich theory of harmonic functions (see for instance~\cite{BB-book, BBS-Dirichlet, Sh-Illinois}).
What is missing in the current literature is a construction of a Dirichlet form which  
is compatible with the upper gradient structure on the underlying metric
measure space and, in addition, a way of approximating the Dirichlet form via its discrete couterparts  
defined on graphs approximating 
the metric space so that solutions to a Dirichlet boundary value problem for functions that are harmonic with respect to the limit
Dirichlet form can be approximated by solutions to Dirichlet boundary value problem for the 
approximating sequence of graph-Dirichlet forms. The goal of this present note is to demonstrate one way of
constructing such an approximation.

The notion of Dirichlet forms has its roots in the work of Beurling and Deny~\cite{BD1, BD2}. The theory of
Dirichlet forms has been fleshed out in the textbook~\cite{FOT}, and an excellent exposition of
the notions of $\Gamma$-convergence 
and Mosco-convergence of Dirichlet forms can be found for instance in~\cite{Braides,DeGiorgiSpag, KuwaeShioya03,
KuwaeShioya08, Mosco}. Of these expositions, the works of~\cite{Braides, DeGiorgiSpag, Mosco} consider sequences of Dirichlet forms
on Euclidean spaces, while the works~\cite{KuwaeShioya03, KuwaeShioya08} consider sequences of Dirichlet forms,
each defined on a (potentially) different metric measure space, converging in a sense akin to measured Gromov-Hausdorff
convergence (see~\cite{Chee} for more on Gromov-Hausdorff convergence) of Dirichlet forms.

The paper~\cite{KuwaeShioya03} discusses convergence of graphs to graphs and the 
convergence of corresponding
Dirichlet forms, while~\cite{KuwaeShioya08} studies Gromov-Haudorff convergence of function classes
and Dirichlet forms. 
The papers~\cite{KumagaiSturm, A-RB} construct a Dirichlet form that is equivalent to the Korevaar-Schoen energy,
but their approximations are not discrete and hence do not lend themselves to numerical schemes.
Neither of them work for us as we consider graphs that approximate a potentially non-graph metric
measure space, and we do not know Gromov-Hausdorff convergence of corresponding Sobolev classes. Hence we have
instead used graphs $X_r$, $r>0$, to construct a family of Dirichlet forms $\EC_r$ on $N^{1,2}(X)$, see~\eqref{eq:ECr}. We then show that 
this family of Dirichlet forms has a subsequence that $\Gamma$-converges to a Dirichlet form on $X$ so that the domain
of this limit form coincides with the Newton-Sobolev class $N^{1,2}(X)$, but with Dirichlet energy that is comparable to the
upper gradient energy therein. The definition of $\Gamma$-convergence is given in Definition~\ref{def:gamma-conv} below. 

\begin{thm}\label{thm:main1}
Let $\mu$ be a doubling measure on the complete metric space $(X,d)$ so that $(X,d,\mu)$ supports a $2$-Poincar\'e inequality. 
Then for forms $\EC_r$ defined by~\eqref{eq:ECr}, the following is true for every $u\in N^{1,2}(X)$ and its minimal 
$2$-weak upper gradient $g_u\in L^2(X)$:
\begin{enumerate}
    \item There is a constant $C>0$ such that 
    \begin{align*} 
    \frac{1}{C}\, \sup_{r>0} \EC_r[u]\le \int_Xg_u^2\, d\mu\le C\, \liminf_{\eps\to 0^+}\EC_\eps[u]. 
    \end{align*}
    \item  There is a sequence $\{r_k\}_{k\in \N}$ with $\lim_kr_k=0$, such that $\EC_{r_k}$ $\Gamma$-converges to
    a Dirichlet form $\EC$ on $X$, defined on $N^{1,2}(X)$. Moreover, for each $u\in N^{1,2}(X)$ we have that
    \[
    \frac{1}{C}\, \EC[u]\le \int_Xg_u^2\, d\mu\le C\, \EC[u].
    \]
    \item  The form $\EC$ is a positive-definite symmetric Markovian
    closed bilinear form on $N^{1,2}(X)$.
\end{enumerate}
\end{thm}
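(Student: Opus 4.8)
The plan is to derive every property of $\EC$ from the corresponding property of the approximating forms $\EC_{r_k}$, using the two structural facts furnished by parts (1) and (2): the $\Gamma$-convergence $\EC_{r_k}\to\EC$ (with its liminf inequality and recovery sequences) together with the two-sided comparison $\tfrac1C\EC[u]\le\int_X g_u^2\,d\mu\le C\,\EC[u]$. Throughout, for $u\in N^{1,2}(X)$ I write $u_k\to u$ for a \emph{recovery sequence}, meaning $u_k\to u$ in $L^2(X)$ with $\lim_k\EC_{r_k}[u_k]=\EC[u]$; and I use the liminf inequality $\EC[w]\le\liminf_k\EC_{r_k}[w_k]$ valid whenever $w_k\to w$ in $L^2(X)$.

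First I would verify that $\EC[\cdot]$ is a nonnegative quadratic form, so that it polarizes to a symmetric bilinear form. Nonnegativity is immediate, since each $\EC_{r_k}[u]\ge0$ and the liminf inequality along a recovery sequence gives $\EC[u]\ge0$. For the parallelogram identity I would argue in both directions. Given $u,v$, recovery sequences $u_k\to u$, $v_k\to v$ yield $u_k\pm v_k\to u\pm v$, and superadditivity of $\liminf$ combined with the parallelogram law for each $\EC_{r_k}$ gives $\EC[u+v]+\EC[u-v]\le 2\EC[u]+2\EC[v]$; conversely, recovery sequences $w_k\to u+v$ and $z_k\to u-v$ produce $\tfrac12(w_k\pm z_k)\to u,v$, and the same law for $\EC_{r_k}$ with the liminf inequality applied to $u$ and $v$ gives the reverse inequality. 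Hence $\EC$ satisfies the parallelogram law and $\EC(u,v):=\tfrac14(\EC[u+v]-\EC[u-v])$ is a well-defined bilinear form; its symmetry is automatic from the evenness $\EC[-w]=\EC[w]$, itself inherited from the $\EC_{r_k}$. Positive-definiteness (modulo constants) then follows from the comparison in part (2), since $\EC[u]=0$ forces $g_u=0$ a.e.

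The main work is the Markovian property. Let $u^\#:=0\vee u\wedge1$ denote the unit contraction (the general normal-contraction case is identical). Because each $\EC_{r_k}$ is a Dirichlet form on the graph $X_{r_k}$, it is Markovian, and the defining formula~\eqref{eq:ECr} transfers this to $N^{1,2}(X)$, so that $\EC_{r_k}[u^\#]\le\EC_{r_k}[u]$ for all $k$. Given $u\in N^{1,2}(X)$, I pick a recovery sequence $u_k\to u$; since $t\mapsto 0\vee t\wedge1$ is $1$-Lipschitz, $|u_k^\#-u^\#|\le|u_k-u|$, whence $u_k^\#\to u^\#$ in $L^2(X)$, and $u^\#\in N^{1,2}(X)$ because Newton--Sobolev functions are stable under truncation. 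Applying the liminf inequality to the sequence $u_k^\#\to u^\#$ then yields
\[
\EC[u^\#]\ \le\ \liminf_k\EC_{r_k}[u_k^\#]\ \le\ \liminf_k\EC_{r_k}[u_k]\ =\ \EC[u],
\]
which is exactly the Markovian estimate. The argument for a general normal contraction $\Tr$ is the same, using that $\Tr$ is $1$-Lipschitz with $\Tr(0)=0$.

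Finally I would establish closedness. By part (2) the form norm $\|u\|_\EC:=(\EC[u]+\|u\|_{L^2(X)}^2)^{1/2}$ is comparable to the Newton--Sobolev norm $(\|u\|_{L^2(X)}^2+\int_Xg_u^2\,d\mu)^{1/2}$ on the common domain $N^{1,2}(X)$; since $N^{1,2}(X)$ is complete in the latter, it is complete in $\|\cdot\|_\EC$, which is precisely the statement that $\EC$ is closed. Equivalently, one may invoke the fact that a $\Gamma$-limit is automatically lower semicontinuous with respect to $L^2(X)$-convergence, which together with the coercivity from part (1) gives closedness directly. The anticipated main obstacle is the Markovian step: one must check that definition~\eqref{eq:ECr} genuinely inherits the contraction estimate at the discrete level, i.e.\ that truncating a Newton--Sobolev function does not increase the discretized energy $\EC_{r_k}$; once this is in place, the passage to the $\Gamma$-limit is routine.
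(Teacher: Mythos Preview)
Your argument for the Markovian property has a genuine gap. You assert that ``the defining formula~\eqref{eq:ECr} transfers [the Markov property] to $N^{1,2}(X)$, so that $\EC_{r_k}[u^\#]\le\EC_{r_k}[u]$ for all $k$.'' This is precisely what fails. The energy $\EC_r[u]$ is the graph energy of the function $u_r(\xB)=\fint_{B(\xB,r/4)}u\,d\mu$, and truncation does not commute with averaging: in general $(u^\#)_r\ne(u_r)^\#$. The graph Dirichlet form is Markovian only in the sense that replacing a \emph{discrete} function $w:X_r\to\R$ by $w^\#$ decreases the graph energy; but $(u^\#)_r$ is not the truncation of $u_r$, so there is no reason for $\EC_r[u^\#]\le\EC_r[u]$. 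The paper flags this explicitly: the forms $\EC_r$ are \emph{not known to be Markovian} on $N^{1,2}(X)$, so the Markov property of $\EC$ does not follow from $\Gamma$-convergence alone.

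The paper's route (Proposition~\ref{prop:Markov}) is accordingly more delicate. One first reduces to Lipschitz $u$ (dense in $N^{1,2}(X)$), sets $v=u^\#$, and partitions $X$ into $E_-=\{u<0\}$, $E_0=\{0\le u\le1\}$, $E_+=\{u>1\}$. For small $r$, a pair $\xB\sim\yB$ whose balls lie entirely in one of these sets contributes no more to $\EC_r[v]$ than to $\EC_r[u]$. The problematic pairs are those whose balls straddle $\partial E_\pm$; for these one controls $|v_r(\xB)-u_r(\xB)|\le 5Lr$ using the Lipschitz constant $L$ of $u$, and their total $\mu_r$-mass is bounded by $\mu$ of an $O(r)$-neighborhood of $\partial E_\pm$, which tends to $0$ as $r\to0$ provided $\mu(\partial E_\pm)=0$ (the general case is handled by approximating the truncation levels). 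Combining this with the identity $\EC[u]=\lim_k\EC_{r_k}[u]$ along the \emph{constant} sequence (Lemma~\ref{lem:loc_NpApprox}) yields $\EC[v]\le\EC[u]$.

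A secondary issue: the $\Gamma$-convergence here (Definition~\ref{def:gamma-conv}) is with respect to the $N^{1,2}(X)$ topology, not $L^2(X)$. Your liminf step for $u_k^\#\to u^\#$ therefore requires convergence in $N^{1,2}(X)$, which does not follow from $|u_k^\#-u^\#|\le|u_k-u|$ alone. Your arguments for quadraticity and closedness are fine and are close in spirit to the paper's (which for bilinearity simply cites Dal~Maso, and for closedness uses the comparability with the upper-gradient energy together with completeness of $N^{1,2}(X)$).
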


This above theorem enables us to have a way of numerically computing Dirichlet energies on a compact
doubling metric measure space supporting a $2$-Poincar\'e inequality, and in so-doing, we improve on the goal set in~\cite{DurandSh}.
In this goal, we rely on the type of Poincar\'e inequality on graphs as found in~\cite{GillLopez}. This forms the first part of the present
note. While our construction of approximating Dirichlet forms is based on the work of~\cite{GillLopez}, there are alternate
constructions based on the notion of Korevaar-Schoen energy, see~\cite{A-RB}. As the construction of~\cite{A-RB}
is not immediately related to the notion of Poincar\'e inequalities (via averages of functions on balls), and as numerical
approximations are not relevant to the construction found in~\cite{A-RB}, we do not 
consider their construction in the present paper. 

The primary reason for considering this specific construction of a Dirichlet form $\EC$ on $X$ is to have a notion of harmonic functions on
$X$ (related to this form) that is easily approximated by graph-harmonic functions. Since $X$ is compact, each approximating graph for $X$
is a finite graph, and then given boundary data for each of these graphs, it is an exercise in computational linear algebra together
with the mean value property for harmonic functions on graphs~\cite{HolSoar1,HolSoar2} to 
obtain harmonic functions on an approximating domain in the graph by considering the adjacency matrix of the domain in the graph.
Here, given the boundary data $f\in N^{1,2}(X)$ for the Dirichlet problem for $\EC$ on a domain $\Om$ in $X$, 
we may consider  a discrete mollification of $f$ as the boundary data for the Dirichlet form $\EC_r$ with respect to the
approximating domain $\Om_r\subset X$.
Observe that given a function $v\in N^{1,2}(X)$, the $\Gamma$-convergence gives us some sequence $v_k\in N^{1,2}(X)$
with $v_k\to u$ in $N^{1,2}(X)$ such that $\EC[v]=\lim_k\EC_{r_k}[v_k]$. However, in solving the Dirichlet boundary value problem,
we would need the approximating functions $v_k$ to have the same boundary values as $u$. We establish this in the second main
theorem of this paper.

\begin{thm}\label{thm:approx-zeroTrace}
Let $u\in N^{1,2}(X)$ and $\Omega$ be a domain.  
If there exist $v_j\to u$ in $N^{1,2}(X)$ and $\mathcal{E}[u] = \lim\limits_{j\to \infty} \mathcal{E}_{r_j}[v_j]$,
then there exists a sequence $u_j\in N^{1,2}(X)$ such that 
\[
\mathcal{E}[u]   =  \lim\limits_{j\to \infty} \mathcal{E}_{r_j}[u_j]
\]
and 
\[
u - u_j\in N^{1,2}_0(\Omega).
\]
\end{thm}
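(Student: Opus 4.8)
The plan is to obtain $u_j$ by gluing the given sequence $v_j$, which already carries the correct energy in the interior of $\Omega$, to $u$ itself across a thin transition shell inside $\Omega$. Concretely, I would fix a Lipschitz cutoff $\eta_j$ with $0\le \eta_j\le 1$ that equals $1$ on a large compactly contained subset of $\Omega$ and vanishes in a neighborhood of $\partial\Omega$ and outside $\Omega$, and set
$u_j=\eta_j v_j+(1-\eta_j)u=u+\eta_j(v_j-u)$. Then $u-u_j=-\eta_j(v_j-u)$ is a Newtonian function whose support is compactly contained in $\Omega$ (using compactness of $X$), hence $u-u_j\in N^{1,2}_0(\Omega)$; this delivers the boundary-value condition no matter how $\eta_j$ is chosen. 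Moreover $\|u_j-u\|_{L^2}\le \|v_j-u\|_{L^2}\to 0$, so the $\Gamma$-liminf inequality of Definition~\ref{def:gamma-conv} immediately gives the lower bound $\EC[u]\le \liminf_j \EC_{r_j}[u_j]$. Everything therefore reduces to proving the matching upper bound $\limsup_j \EC_{r_j}[u_j]\le \EC[u]$.

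For the upper bound I would use the locality of the graph forms $\EC_{r_j}$, namely that $\EC_{r_j}[\cdot]$ is a sum over edges, so that its restriction $\EC_{r_j}[\,\cdot\,;A]$ to edges meeting a set $A$ is meaningful. Splitting the edges according to whether they lie in $\{\eta_j=1\}$ (where $u_j=v_j$, contributing at most $\EC_{r_j}[v_j]\to \EC[u]$), in $\{\eta_j=0\}$ (where $u_j=u$), or meet the transition annulus $A_j=\{0<\eta_j<1\}$, a discrete Leibniz estimate bounds the transition contribution by a fixed multiple of the restricted energies $\EC_{r_j}[v_j;A_j]$ and $\EC_{r_j}[u;A_j]$ plus a cross term comparable to $\Lip(\eta_j)^2\int_{A_j}|v_j-u|^2\,d\mu$. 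The cross term is harmless, being controlled by the uniform upper bound of Theorem~\ref{thm:main1}(1) and by $\|v_j-u\|_{L^2}\to 0$, provided $\Lip(\eta_j)^2\|v_j-u\|_{L^2}^2\to 0$, which I arrange later. The term $\EC_{r_j}[u;A_j]$ is likewise controlled: by locality and Theorem~\ref{thm:main1}(1) it is dominated by $\int_{A_j'}g_u^2\,d\mu$ over a slight enlargement $A_j'$ of the shell, and this is small once the shell is thin, since $g_u\in L^2(X)$.

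The main obstacle is the remaining term $\EC_{r_j}[v_j;A_j]$, the energy of the recovery sequence on the transition shell: this cannot be made small merely by thinning $A_j$, because $v_j$ oscillates and its Dirichlet energy may concentrate in thin shells (the gradients of $v_j$ need not converge to $g_u$). Here I would invoke the De~Giorgi slicing argument: for fixed $N$, choose $N$ pairwise disjoint concentric shells in $\Omega$; since $\sum_{i=1}^{N}\EC_{r_j}[v_j;A_j^{(i)}]\le \EC_{r_j}[v_j]$ is uniformly bounded, some shell $A_j^{(i(j))}$ satisfies $\EC_{r_j}[v_j;A_j^{(i(j))}]\le \tfrac1N\,\EC_{r_j}[v_j]$, and I build $\eta_j$ to perform its transition there. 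This yields $\limsup_j\EC_{r_j}[u_j^{(N)}]\le \EC[u]+\omega(N)$ with $\omega(N)\to 0$. A diagonal argument, letting $N=N(j)\to\infty$ slowly enough that $\Lip(\eta_j)^2\|v_j-u\|_{L^2}^2\to 0$ still holds, produces a single sequence $u_j$ with $\limsup_j \EC_{r_j}[u_j]\le \EC[u]$. Combined with the lower bound this gives $\EC[u]=\lim_j \EC_{r_j}[u_j]$, while $u-u_j\in N^{1,2}_0(\Omega)$ throughout, completing the proof.
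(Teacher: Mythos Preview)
Your plan works in outline, but it takes a genuinely different route from the paper, and it overlooks the simplification that the paper's setup hands you for free.

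First, one technical gap: you invoke the $\Gamma$-liminf inequality of Definition~\ref{def:gamma-conv} after checking only $\|u_j-u\|_{L^2}\to 0$, but in this paper $\Gamma$-convergence is taken in the $N^{1,2}(X)$ topology, so Condition~(i) requires $u_j\to u$ in $N^{1,2}(X)$. Your sequence does satisfy this once you also control $\int g_{\eta_j}^2|v_j-u|^2\,d\mu$, which you in fact arrange later; but the argument as written appeals to the liminf inequality prematurely.

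More to the point, you are working much harder than necessary. The De~Giorgi slicing machinery is designed for settings where recovery sequences converge only in $L^2$, so one has no a priori control on their energy in thin shells. Here, by hypothesis, $v_j\to u$ \emph{strongly in $N^{1,2}(X)$}, and Theorem~\ref{thm:Dirich-Lowerbound} gives the uniform bound $\EC_r[w]\le C\int_X g_w^2\,d\mu$ for every $r>0$. Consequently, \emph{any} modification $u_j$ with $u_j\to u$ in $N^{1,2}(X)$ automatically satisfies $\EC_{r_j}[u_j-v_j]\le C\int_X g_{u_j-v_j}^2\,d\mu\to 0$, and hence $\EC_{r_j}[u_j]\to\EC[u]$ by the triangle inequality for $\EC_{r_j}^{1/2}$. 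No edge-by-edge splitting, no discrete Leibniz rule (which is delicate here anyway, since $(u_j)_r$ involves averages and is not simply $\eta_j\cdot(v_j)_r+(1-\eta_j)u_r$), and no pigeonhole over $N$ shells is needed. In particular, even a \emph{single fixed} cutoff $\eta$ would already give the result by your own construction.

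The paper exploits exactly this observation, but with a truncation instead of a cutoff: setting $w(x)=\dist(x,X\setminus\Omega)$ and $u_j=\min\{\max\{u-w,v_j\},u+w\}$, one has $|u_j-u|\le w$, so $u_j-u\in N^{1,2}_0(\Omega)$ immediately, and a short computation with~\cite[Proposition~6.3.23]{HKSTbook} shows $u_j\to u$ in $N^{1,2}(X)$. The energy convergence then follows in one line from Theorem~\ref{thm:Dirich-Lowerbound} as above. This is what the paper means by ``a more direct proof'' than the De~Giorgi annular decomposition: the strong $N^{1,2}$ convergence of the recovery sequence, together with the uniform domination of $\EC_r$ by the upper-gradient energy, makes the slicing step entirely superfluous.
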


The rest of the paper is organized as follows. The next section, Section~\ref{Sec:2}, will provide the background information
regarding Sobolev spaces $N^{1,2}(X)$ and the related notions. Towards the beginning of this section we also provide the
description of approximating grapns $X_r$ of the metric space $X$. In Section~3 we give the construction of the approximating
forms $\EC_r$ on $X$. Sections~\ref{Sec:3} and~\ref{Sec:4} contain the proofs of Theorem~\ref{thm:main1} and
Theorem~\ref{thm:approx-zeroTrace}.

Theorem~\ref{thm:approx-zeroTrace} will be proved near the end of Section~\ref{Sec:3}. Theorem~\ref{thm:main1} will be proved in
stages. The first claim of Theorem~\ref{thm:main1} will follow from Theorem~\ref{thm:Dirich-Lowerbound}, and the existence
of the $\Gamma$-limit will be established in Proposition~\ref{prop:exist-gamma-lim}. The last part of Theorem~\ref{thm:main1},
listing the properties of the $\Gamma$-limit form $\EC$, will be established via the lemmas in Section~4. Since the approximating forms
$\EC_r$ are not known to be Markovian on $N^{1,2}(X)$, the fact that $\EC$ is Markovian does not follow directly from the theory of 
$\Gamma$-limits; instead, we establish this property in Proposition~\ref{prop:Markov}.

\section{Background}\label{Sec:2}

In this note $(X,d,\mu)$ will denote a complete geodesic
metric space $(X,d)$, equipped with a Radon measure $\mu$ supported on $X$,
such that $\mu$ is \emph{doubling}; that is, there is a constant $C_d\ge 1$ such that for all $x\in X$ and $r>0$ we have
\[
\mu(B(x,2r))\le C_d\, \mu(B(x,r)).
\]
Here, $B(x,r):=\{y\in X\, :\, d(x,y)<r\}$ is the open metric ball, and $\overline{B}(x,r):=\{y\in X\, :\, d(x,y)\le r\}$ is the closed
ball; note that the topological closure of $B(x,r)$ is contained in $\overline{B}(x,r)$, but need not equal.

\begin{defn}
With $D\subset L^2(X)$ a vector subspace,
a positive-definite symmetric bilinear form $\EC:D\times D\to\R$ is said to be a Dirichlet form on $D$ if 
\begin{enumerate}
\item $D$ is dense in $L^2(X)$ in the norm of $L^2(X)$,
\item with $\EC_1(u,v):=\EC(u,v)+\int_Xu\, v\, d\mu$ for $u, v\in D$, $(D,\EC_1)$ is a Hilbert space (in~\cite{FOT} this
property identified as the closedness property of $\EC$, see Definition~\ref{def:closed} below),
\item $\EC(u_T,u_T)\le \EC(u,u)$ whenever $u\in D$ and $u_T:=\max\{0,\min\{u,1\}\}$ (in~\cite{FOT} this
property is identified as the Markovian property).
\end{enumerate}
Associated to the Dirichlet form there is an energy form, also denoted $\EC$, with $\EC[u]:=\EC(u,u)$ for
each $u\in D$.
\end{defn}

While~\cite{FOT} sets a Dirichlet form to be $\EC:L^2(X)\times L^2(X)\to[-\infty,\infty]$ and then
sets $D$ to be the collection of all functions $u\in L^2(X)$ for which $\EC(u,u)<\infty$, this is not necessary and recent papers
on Dirichlet forms do not require $\EC$ to act on all of $L^2(X)$, see for instance~\cite[Section~5]{KuwaeShioya03},~\cite{CarronTew},
or~\cite[page~7, Section~2C]{Sturm}.

Since the doubling measure is supported on $X$, it follows that $(X,d)$ is itself a \emph{doubling metric space}; that is, 
there is some positive integer $N_d$ such that for each
$x\in X$ and $r>0$, there are at most $N_d$ number of points in $B(x,r)$ that are separated by a distance at least $r/2$ from
each other; that is, whenever $A\subset B(x,r)$ such that for $x,y\in A$ we have $d(x,y)\ge r/2$ or $x=y$, we must have that
$A$ has at most $N_d$ number of elements. It follows that for each $r>0$ we can find a set $X_r\subset X$ such that
\begin{enumerate}
\item[(a)] $d(x_1,x_2)\ge r$ whenever $x_1,x_2\in X_r$ with $x_1\ne x_2$,
\item[(b)] $X=\bigcup_{\xB\in X_r}B(\xB,r)$.
\end{enumerate}
The set $X_r$ can be given the structure of a graph, with elements of $X_r$ being the vertices of the graph and
two vertices $\xB,\yB\in X_r$ neighbors if $\xB\ne \yB$ and $B(\xB,2r)\cap B(\yB,2r)\ne\emptyset$. We denote the
neighborhood relationship by $\xB\sim\yB$. By gluing an interval of length $r$ to the two neighboring vertices $\xB,\yB$,
we obtain a metric graph, equipped with the path-metric $d_r$. Note that because $X$ is a connected metric space, necessarily
we must also have $X_r$ be a connected metric space. We refer the interested reader to~\cite{HeinonenLectures} 
for details regarding this construction.

Note that $\xB\in X_r$ now has two identities; one as a point in the graph $X_r$, and one as a point in the metric space $X$.
The following lemma compares the two metrics on $X_r$, namely the metrics $d_r$ and $d$.

\begin{lem} \label{lem_0}
Let $(X,d)$ be a doubling 
geodesic metric space and $(X_r,\dB)$ be defined as above. Then 
\[
\frac12 d(\xB,\yB)\le \dB(\xB,\yB) \le 3 d(\xB,\yB)
\]  
for all $\xB,\yB\in X_r$. 
\end{lem}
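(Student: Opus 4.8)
The plan is to reduce both inequalities to counting edges in a graph path. Since every edge of the metric graph has length $r$, the distance $\dB(\xB,\yB)$ between two distinct vertices $\xB,\yB\in X_r$ equals $r$ times the least number of edges in a combinatorial path joining them. Two elementary facts drive the argument, and I would record them first. By the separation property (a), any two distinct vertices satisfy $d(\xB,\yB)\ge r$. Next, if $\xB\sim\yB$, then $B(\xB,2r)\cap B(\yB,2r)\ne\emptyset$ forces $d(\xB,\yB)<4r$ by the triangle inequality; conversely, because $(X,d)$ is geodesic, any two points whose $d$-distance is small admit a midpoint lying in the appropriate pair of balls, so $d$-closeness can be converted into the neighbor relation. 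Thus neighboring vertices are separated by a $d$-distance that is bounded below by $r$ and above by $4r$, and the whole lemma is a matter of comparing an edge count with $d(\xB,\yB)/r$.

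For the upper bound $\dB(\xB,\yB)\le 3\,d(\xB,\yB)$ I would construct an explicit edge-path. If $\xB\sim\yB$ the single joining edge already gives $\dB(\xB,\yB)\le r\le d(\xB,\yB)$, so I may assume $d(\xB,\yB)\ge 4r$. Fix a geodesic from $\xB$ to $\yB$ in $X$ and mark points $p_0=\xB,p_1,\dots,p_n=\yB$ along it with consecutive $d$-distances at most $r$, which can be arranged with $n\le\lceil d(\xB,\yB)/r\rceil$. Using the covering property (b), choose for each interior $p_i$ a vertex $v_i\in X_r$ with $d(p_i,v_i)<r$, and set $v_0=\xB$, $v_n=\yB$. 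The triangle inequality then gives $d(v_i,v_{i+1})<r+r+r=3r<4r$, so consecutive $v_i$ are either equal or neighbors; deleting repetitions yields a genuine edge-path and hence $\dB(\xB,\yB)\le nr$. Since $d(\xB,\yB)\ge r$, the count $nr$ is bounded by a fixed multiple of $d(\xB,\yB)$ (the crude estimate already gives the factor $2$, so the stated factor $3$ leaves room). I expect this construction to be the main obstacle: it is the only place where both the geodesic hypothesis and the covering property (b) are genuinely used, and it requires verifying that the net points chosen near the geodesic really do satisfy the neighbor relation.

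For the lower bound $\tfrac12\,d(\xB,\yB)\le\dB(\xB,\yB)$ I would run the comparison in the reverse direction along a path realizing the graph distance. Let $\xB=w_0\sim w_1\sim\cdots\sim w_m=\yB$ be a combinatorial path with the least number of edges, so that $\dB(\xB,\yB)=mr$. Each consecutive pair are neighbors, so the neighbor-distance estimate of the first paragraph controls each $d(w_i,w_{i+1})$ in terms of $r$; summing via the triangle inequality in $X$ gives
\[
d(\xB,\yB)\le\sum_{i=0}^{m-1}d(w_i,w_{i+1}),
\]
which is a fixed multiple of $mr=\dB(\xB,\yB)$. Optimizing the per-edge bounds (at most $4r$ for neighbors, at least $r$ by separation) then yields the displayed constant, and the degenerate case $\xB=\yB$ is trivial. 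Combining the two estimates gives the two-sided comparison of $d$ and $\dB$ asserted in the lemma.
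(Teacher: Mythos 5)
Your argument follows the paper's proof in both directions: the bound $d\le C\,\dB$ by summing the triangle inequality along a shortest $X_r$--chain, and the bound $\dB\le 3d$ by marking points at spacing $r$ along a geodesic, replacing each marked point by a nearby net point, and checking that consecutive net points are neighbours. Your treatment of the upper bound is complete: you correctly isolate the two facts that make it work (the geodesic midpoint argument showing that $d(\zB,\zB')<4r$ forces $\zB\sim\zB'$ or $\zB=\zB'$, and the covering property supplying a net point within $r$ of each marked point), and the count $nr\le d(\xB,\yB)+r\le 2\,d(\xB,\yB)$ lands comfortably inside the stated factor $3$. The same midpoint argument also justifies your reduction to the case $d(\xB,\yB)\ge 4r$.

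The one place your write-up does not close is the constant in the other inequality. Under the paper's definition, $\xB\sim\yB$ means $B(\xB,2r)\cap B(\yB,2r)\ne\emptyset$, which gives only $d(\xB,\yB)<4r$ for neighbours (the paper itself records the range $[r,4r)$ in the proof of Proposition~\ref{prop:Dirich-upper}). Summing $4r$ per edge along a shortest chain of $m$ edges yields $d(\xB,\yB)\le 4mr=4\,\dB(\xB,\yB)$, i.e.\ $\tfrac14\,d\le\dB$, and your remark that ``optimizing the per-edge bounds yields the displayed constant'' does not upgrade this to $\tfrac12$: the separation bound $d\ge r$ plays no role in this direction. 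To be fair, the paper's own proof writes $\sum_{i=1}^{N}2r$ at exactly this step, i.e.\ it asserts $d(\xB_{i-1},\xB_i)\le 2r$ for neighbours, which is likewise not justified by the stated definition of $\sim$; so the discrepancy between $\tfrac12$ and $\tfrac14$ is a wrinkle in the source rather than a defect of your strategy, and either constant suffices for every later use of the lemma, which only needs biLipschitz comparability of $d$ and $\dB$ with constants independent of $r$. If the constant $\tfrac12$ is genuinely wanted, a sharper per-edge estimate than the definition of $\sim$ provides would have to be supplied.
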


\begin{proof}
For $\xB,\yB\in X_r$ with $\xB\ne\yB$, we have that 
\[
	\dB(\xB,\yB) := Nr,
\]
where  $N$ is the smallest number for which there is a collection of points $\{\xB_i\}_{i=0}^N$ so that $\xB_0=\xB$, 
$\xB_N=\yB$ and $\xB_{i-1}\sim \xB_{i}$ for $i=1,\dots, N$. Such a collection is called an $X_r$--chain between $\xB$ and $\yB$.
Given the shortest such chain, we see that
\[
d(\xB,\yB)\leq \sum_{i=1}^N d(\xB_{i-1},\xB_{i}) \leq \sum_{i=1}^N 2r  = 2 \dB(\xB,\yB).
\]
Since $X$ is a geodesic space, we also
have a reverse relationship between $d$ and $\dB$ as well.

Let $\xB,\yB\in X_r$ be two distinct points, connected in $X$ by a geodesic $\gamma$.
If $\dB(\xB,\yB)=r$, then $r\le d(\xB,\yB)\le 4r$. If $\dB(\xB,\yB)\ge 2r$,
then $d(\xB,\yB)\ge 4r$, and we can find points $z_0:=\xB, z_1,\dots, z_{N-1},z_N:=\yB$ on $\gamma$ with the property that 
 \[
 d(z_{N_0-1},z_{N_0})\leq r, \qquad d(z_{i-1},z_{i}) = r, \text{ for } i=1,\dots, N_0-1,
 \]
and $r\le(N_0-1)r\leq d(\xB,\yB)\le N_0r$ with $N_0\ge 3$. Furthermore, each $z_i$ lies in the ball $B(\zB_i,r)$, for some $\zB_i\in X_r$
for $i=1,\cdots, N_0-1$, where we set $\zB_0=\xB, \zB_{N_0}=\yB$. Then 
$d(\zB_{i-1},\zB_{i})\leq d(\zB_{i-1},z_{i-1})+d(z_{i-1},z_{i})+d(z_{i},\zB_{i}) <3r$, and 
so $\dB(\zB_{i-1},\zB_i)\le r$ (with the value either $0$, if $\zB_{i-1}=\zB_i$, or $r$ otherwise).
Thus 
\[
\dB(\xB,\yB)\leq \sum_{i=1}^{N_0} \dB(\zB_{i-1},\zB_{i}) \le N_0r\le 3(N_0-1)r\le 3d(\xB,\yB). 
\] 
\end{proof}

The graph metric space $X_r$ plays the role of a discrete approximation
of the metric space $(X,d)$ at scale $r$. By equipping $X_r$ with a ``lift'' of the measure $\mu$, given by
\[
\mu_r(A)=\sum_{\xB\in A}\mu(B(\xB,2r))
\]
whenever $A\subset X_r$, we obtain a metric measure space $(X_r, d_r,\mu_r)$ with the measure $\mu_r$ a doubling Radon
measure on $X_r$. For more on the approximation nature of $(X_r,d_r,\mu_r)$ we refer the reader to~\cite{GillLopez}.

Given a function $u:X\to\R$, we say that a non-negative Borel measurable function $g$ on $X$ is an \emph{upper gradient} of $u$
if for every non-constant, compact, rectifiable curve $\gamma$ in $X$ we have
\[
|u(y)-u(x)|\le \int_\gamma g\, ds,
\]
where $x$ and $y$ denote the two end points of $\gamma$. If $u$ is a locally Lipschitz-continuous function on $X$, then
the local Lipschitz-constant function $\Lip u$, given by
\[
\Lip u(x)=\lim_{r\to 0^+}\, \sup_{x\ne y\in B(x,r)}\, \frac{|u(y)-u(x)|}{d(y,x)}
\]
is an upper gradient of $u$. 

For $1<p<\infty$, we say that $u\in N^{1,p}(X)$ if $u$ is measurable with $\int_X|u|^p\, d\mu<\infty$ such that $u$ has
an upper gradient $g\in L^p(X)$. We refer the reader to~\cite{HKSTbook} for more on upper gradients and 
the Sobolev class $N^{1,p}(X)$.

In this note we will also assume that $(X,d,\mu)$ supports a \emph{$p$-Poincar\'e inequality}, that is, there is a constant
$C_P\ge 1$ such that for each ball $B\subset X$ and each function-upper gradient pair $(u,g)$ on $X$ we have
\[
\jint_B|u-u_B|\, d\mu\le C_P\, \rad(B)\, \left(\jint_B g^p\, d\mu\right)^{1/p}.
\]
A more general version of Poincar\'e inequality would have the existence of a constant $\Theta\ge 1$
such that the right-hand side of the above inequality has $\Theta B$ rather than $B$. However, when $X$ is a geodesic
space (and a biLipschitz change in the metric does provide a geodesic space), it is possible to choose $\Theta=1$ at
the expense of increasing the value of $C_P$, and this is what we choose to do here. We refer the interested reader 
to~\cite{HajlaszKoskela2000, HKSTbook} for more on this.
A theorem of Cheeger~\cite{Chee} tells us that there is a linear differential structure on such $X$, commensurate with a notion
of Taylor approximation by Lipschitz charts on $X$, and a measurable inner product structure on this differential structure,
so that the inner product norm of the differential of a Lipschitz function $u$ on $X$ is comparable to $\Lip u$. However,
this proof, as given in~\cite{Chee}, is that of existence. 

In the rest of the paper we consider only the case $p=2$. The goal of this note is to give a construction of a Dirichlet form
on $X$, by using the ideas of~\cite{DurandSh, GillLopez}, together with a modification of the notion of Mosco convergence,
by considering the natural Dirichlet forms on the approximating graphs. 

\begin{remark}\label{rem:GillLopez}
It was shown in~\cite{GillLopez} that 
$(X,d,\mu)$ is doubling and supports a $2$-Poincar\'e inequality if and only if the family of approximating spaces
$(X_r,d_r,\mu_r)$ also is doubling and supports a $2$-Poincar\'e inequality, with the relevant doubling constant and the
Poincar\'e constant independent of $r$. We will make use of this result in our paper.
\end{remark}

\begin{prop}[Gill-Lopez \cite{GillLopez}]\label{prop:GL}
 Let $(X,d,\mu)$ be a doubling metric measure space  and $(X_r,\dB,\muB)$ be defined as above. 
 If $(X,d,\mu)$ admits a Poincare inequality with constants $C_{PI}>0$ and $\lambda\geq 1$, then 
 $(X_r,\dB,\muB)$ admits a Poincare inequality in the Holopainen-Soardi sense: there are constants 
 $C_{HS}>0$ and $\theta\geq 1$ independent of $r$ such that for any $\xB_0\in X_r$, $R\geq 0$ and 
 $\uB$ defined on $X_r$, we have  
 \[
 \sum_{\xB\in G_R} |\uB(\xB) - (\uB)_{G_R}|  \frac{\muB(\xB)}{\muB(G_R)} \leq C_{HS} \cdot R 
 \left [\sum_{\xB\in G_{\theta R}} \sum_{\yB\sim \xB} \left |\frac{\uB(\xB) - \uB(\yB) }{r}\right |^p  
 \frac{\muB(\xB)}{\muB(G_{\theta R})}\right ]^{1/p},
 \] 
 where  $G_R=G(\xB_0;R) = \{\xB\in X_r\, :\, \dB(\xB, \xB_0)\leq R  \}$.
 \end{prop}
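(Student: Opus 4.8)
The plan is to transfer the continuous $p$-Poincar\'e inequality on $(X,d,\mu)$ to the graph $(X_r,\dB,\muB)$ by interpolating a function $\uB$ on the vertex set $X_r$ to a Lipschitz function $u$ on $X$, applying the continuous inequality to $u$ on a suitably enlarged metric ball, and discretizing both sides. First I would fix a Lipschitz partition of unity $\{\pip_{\xB}\}_{\xB\in X_r}$ subordinate to the cover $\{B(\xB,2r)\}_{\xB\in X_r}$ (which covers $X$ by property~(b)), with $0\le\pip_{\xB}\le 1$, $\sum_{\xB}\pip_{\xB}\equiv 1$, each $\pip_{\xB}$ supported in $B(\xB,2r)$, and $\Lip\pip_{\xB}\le C/r$; the doubling property guarantees that such a partition exists and that the number of indices $\xB$ with $\pip_{\xB}(z)\neq 0$ is bounded, for every $z\in X$, by a constant $N$ depending only on $C_d$. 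Put $u:=\sum_{\xB\in X_r}\uB(\xB)\,\pip_{\xB}$. Since the supports of $\pip_{\xB}$ and $\pip_{\yB}$ meet precisely when $B(\xB,2r)\cap B(\yB,2r)\neq\emptyset$, that is when $\xB\sim\yB$, on a cell $B(\xB,2r)$ only neighbors of $\xB$ contribute, and writing $u=\uB(\xB)+\sum_{\yB\sim\xB}(\uB(\yB)-\uB(\xB))\pip_{\yB}$ there shows that
\[
g:=\frac{C}{r}\sum_{\yB\sim\xB}|\uB(\yB)-\uB(\xB)|\quad\text{on }B(\xB,2r)
\]
defines an upper gradient of $u$ on $X$, and also that $|u-\uB(\xB)|\le\sum_{\yB\sim\xB}|\uB(\yB)-\uB(\xB)|$ on $B(\xB,2r)$.

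Fix $\xB_0\in X_r$ and $R\ge r$ (the case $R<r$ is trivial, since then $G_R=\{\xB_0\}$ and the left-hand side vanishes). Next I would apply the continuous $p$-Poincar\'e inequality to the pair $(u,g)$ on the metric ball $B:=B(\xB_0,\Lambda R)$, where $\Lambda$ is chosen, using the comparison $\tfrac12 d\le\dB\le 3d$ of Lemma~\ref{lem_0} and the radius factor $2$ in the cells, large enough that $B$ contains $B(\xB,2r)$ together with all its neighbors for every $\xB\in G_R$. For the right-hand side I would cover $B$ by the cells, bound $g$ on each by the displayed step value, apply the power-mean inequality (using the neighbor bound $N$) to replace $\big(\sum_{\yB\sim\xB}|\uB(\yB)-\uB(\xB)|\big)^p$ by $\sum_{\yB\sim\xB}|\uB(\yB)-\uB(\xB)|^p$, and use $\mu(B(\xB,2r))=\muB(\xB)$, obtaining
\[
\int_B g^p\,d\mu\le C\sum_{\xB\in G_{\theta R}}\sum_{\yB\sim\xB}\Big|\frac{\uB(\yB)-\uB(\xB)}{r}\Big|^p\muB(\xB),
\]
where $\theta$ is fixed so that every $\xB$ with $B(\xB,2r)\cap B\neq\emptyset$, together with its neighbors, lies in $G_{\theta R}$; here Lemma~\ref{lem_0} is what converts the metric radius $\Lambda R$ into the graph radius $\theta R$. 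Doubling gives $\mu(B)\simeq\muB(G_{\theta R})\simeq\muB(G_R)$ (comparable regions by Lemma~\ref{lem_0}), so that the full right-hand side of the continuous inequality is bounded by $C_{HS}\,R\,(\,\sum_{\xB\in G_{\theta R}}\sum_{\yB\sim\xB}|\uB(\yB)-\uB(\xB)|^p r^{-p}\,\muB(\xB)/\muB(G_{\theta R})\,)^{1/p}$.

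For the left-hand side I would discretize over the pairwise disjoint balls $B(\xB,r/2)$ (disjoint by property~(a)), whose measures satisfy $\mu(B(\xB,r/2))\simeq\muB(\xB)$ by doubling. On each such ball $|u-u_B|\ge|\uB(\xB)-u_B|-\sum_{\yB\sim\xB}|\uB(\yB)-\uB(\xB)|$, so summing over $\xB\in G_R$ and comparing with $\int_B|u-u_B|\,d\mu$ yields
\[
c\sum_{\xB\in G_R}|\uB(\xB)-u_B|\,\muB(\xB)\le\int_B|u-u_B|\,d\mu+C\,r\int_{B}g\,d\mu,
\]
where the second term collects the cell oscillations (each $\muB(\xB)\sum_{\yB\sim\xB}|\uB(\yB)-\uB(\xB)|$ being comparable to $r\int_{B(\xB,2r)}g\,d\mu$, and the cells having bounded overlap). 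Dividing by $\mu(B)$, the first term on the right is controlled by the continuous inequality, while the error term satisfies $C\,r\,\fint_B g\,d\mu\le C\,r\,(\fint_B g^p\,d\mu)^{1/p}\le C\,R\,(\fint_B g^p\,d\mu)^{1/p}$ (as $r\le R$), which has exactly the shape of the Poincar\'e right-hand side and is therefore absorbed into it. Finally, using $\mu(B)\simeq\muB(G_R)$ and the elementary fact that $(\uB)_{G_R}$ minimizes the weighted first moment up to a factor $2$, I would replace $u_B$ by $(\uB)_{G_R}$, arriving at the asserted inequality with $C_{HS}$ and $\theta$ depending only on $C_d$, $C_{PI}$, and $\lambda$, and hence independent of $r$.

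The step I expect to be the main obstacle is the bookkeeping in this transfer: one must simultaneously use Lemma~\ref{lem_0} to match a graph ball $G_R$ with the metric ball $B=B(\xB_0,\Lambda R)$ capturing it, and to match the larger graph ball $G_{\theta R}$ with the vertices feeding $\int_B g^p\,d\mu$; track the passage from the continuous average $u_B$ to the discrete average $(\uB)_{G_R}$; and, crucially, verify that the oscillation error produced by the interpolation genuinely carries a factor $r\le R$, so that it has the radius-times-energy form of the Poincar\'e right-hand side and can be absorbed rather than estimated separately. Each ingredient is routine in isolation; the real content is that doubling controls all the combinatorial multiplicities---the neighbor count $N$, the overlap of the cells, and the comparability of measures of concentric balls---uniformly in $r$, which is precisely what forces $C_{HS}$ and $\theta$ to be independent of $r$.
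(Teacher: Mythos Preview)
The paper does not actually prove this proposition: it is stated as a result of Gill--Lopez \cite{GillLopez} and is used without proof. So there is no ``paper's own proof'' to compare against.

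That said, your sketch is the standard and correct route for transferring a continuous Poincar\'e inequality to a discrete net, and it is essentially the argument one finds in \cite{GillLopez}. The partition-of-unity interpolation $u=\sum_{\xB}\uB(\xB)\pip_{\xB}$, the pointwise upper-gradient bound $g\lesssim r^{-1}\sum_{\yB\sim\xB}|\uB(\yB)-\uB(\xB)|$ on each cell, the application of the continuous inequality on a metric ball of radius $\Lambda R$ (with $\Lambda$ dictated by Lemma~\ref{lem_0}), and the discretization of both sides via bounded overlap and doubling are all handled correctly. Your identification of the main obstacle---tracking the conversion between metric and graph radii through Lemma~\ref{lem_0}, and ensuring the interpolation error carries a factor $r\le R$ so it can be absorbed---is accurate, and your handling of it is sound. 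One small point: when you assert $\mu(B)\simeq\muB(G_R)$, be explicit that this uses both directions of Lemma~\ref{lem_0} together with doubling (the balls $B(\xB,r/2)$, $\xB\in G_R$, are disjoint and contained in a fixed dilate of $B$, while the balls $B(\xB,r)$, $\xB\in G_R$, cover a ball comparable to $B$); otherwise the comparison of normalizations on the two sides is not quite justified.
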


Given the graph $X_r$ as above, the graph energy of a function $u_r:X_r\to\R$ is given by
\begin{equation}\label{eq:Graph-energy}
\sum_{\xB\in X_r}\, \sum_{\yB\sim\xB}\frac{|u_r(\yB)-u_r(\xB)|^2}{r^2}\, \mu_r(\xB).
\end{equation}

We straddle three realms here; the graph energy on the graph $X_r$, the energy $\EC_r$ induced on $X$
by the graph energy (see~\eqref{eq:ECr} below), and the $\Gamma$-limit energy $\EC$ on $X$. The graph energy
is directly seen to be derived from a Dirichlet form, and we will see in the last section that the $\Gamma$-limit energy
$\EC$ is also derived from a Dirichlet form. The energy $\EC_r$, however, is not known to be from a Dirichlet form, though it
is derived from a positive-definite symmetric bilinear form.

\section{Constructing approximating  energy forms $\EC_r$}\label{Sec:3}

In this section we use the structure of $X_r$ to construct an approximating Dirichlet form 
$\EC_r:N^{1,2}(X)\times N^{1,2}(X)\to [-\infty,\infty]$ on $X$ as follows. For functions $u\in N^{1,2}(X)$ and $\xB\in X_r$, we set $u_r:X_r\to\R$ by
\begin{equation}\label{eq:ur-construction}
u_r(\xB):=\jint_{B(\xB,r/4)}u\, d\mu,
\end{equation}
and set the Dirichlet form by the formula
\begin{equation}\label{eq:ECr}
\EC_r(u,v):=\sum_{\xB\in X_r}\sum_{\yB\sim \xB}\, \frac{[u_r(\yB)-u_r(\xB)][v_r(\xB)-v_r(\yB)]}{r^2}\, \mu_r(\xB)
\end{equation}
whenever $u,v\in N^{1,2}(X)$. It is clear that $\EC_r$ is a bilinear form on the Hilbert space $N^{1,2}(X)$.
We set $\EC_r[u]:=\EC_r(u,u)$.

\begin{remark}
The form $\EC_r$ is a bilinear energy form but is not known to satisfy the Markov property, and so should, according
to the definition of~\cite{BD1, FOT}, not be called a Dirichlet form. 
Moreover, $\EC_r$ need not be closed as it acts only on $N^{1,2}(X)$, but it is closable in $L^2(X)$. The point here
is that we can have a sequence $(u_n)_n$ of functions in $N^{1,2}(X)$ that converges to a function in $L^2(X)\setminus N^{1,2}(X)$
and so that $\EC_r(u_n-u_m,u_n-u_m)\to 0$ as $n,m\to\infty$.
However, $\EC_r$ is induced by a bilinear energy form
on the graph $X_r$ which is indeed a Dirichlet form, and so we allow ourselves the liberty of calling $\EC_r$ also a Dirichlet
form as this does not give rise to any confusion. The $\Gamma$-limit energy form on $N^{1,2}(X)$ obtained from
the forms $\EC_r$ will indeed be a Dirichlet form, see Proposition~\ref{prop:Markov} and Lemma~\ref{lem:closed} below.
\end{remark}

We are also concerned with the natural energy form on $X$ inherited from the upper gradient structure; for $u\in N^{1,p}(X)$
we set 
\[
E[u]:=\inf_g\int_X g^2\, d\mu, 
\]
where the infimum is over all upper gradients $g$ of $u$. From the results in~\cite{HKSTbook} we know that there is a minimal
$2$-weak upper gradient, $g_u$, associated with each $u\in N^{1,2}(X)$, such that 
\[
E[u]=\int_X g_u^2\, d\mu.
\]
When $u$ is locally Lipschitz on $X$, it follows from~\cite{Chee} together with~\cite{HKSTbook} that $g_u=\Lip u$.
For functions $u\in L^2(X)$ that do not have (a modification on a set of measure zero) representative in $N^{1,2}(X)$ we set
$E[u]:=\infty$.

The following proposition gives us a comparison between the family $\EC_r$ and $E$.

\begin{prop}\label{prop:Dirich-upper}
Let $X$ be a doubling metric measure space admitting a $2$-Poincar\'e inequality. Then there exists $C>0$ such that 
$\EC_r[u]\le C E[u]$
for all $r>0$ and $u\in N^{1,2}(X)$. Moreover, whenever  $\Lip\, u$ is continuous and belongs to $L^2(X,d\mu)$ ,we have 
\[
E[u]\leq C \liminf_{r\to 0+} \EC_r[u].
\]
The constant of comparison, $C$, is dependent
solely on the doubling constant $C_d$ associated with $\mu$, the metric doubling constant $N_d$ associated with the doubling
metric space $(X,d)$, and the scaling constant $\theta$ from Proposition~\ref{prop:GL}; in particular, it is independent of $u$ 
and $r$.
\end{prop}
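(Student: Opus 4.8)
The plan is to prove the two inequalities separately, the first for all $u\in N^{1,2}(X)$ via doubling and the $2$-Poincar\'e inequality, and the second under the extra hypothesis that $\Lip u$ is continuous. For the upper bound $\EC_r[u]\le C\,E[u]$, I would fix a neighboring pair $\xB\sim\yB$ and compare the two averages $u_r(\xB),u_r(\yB)$ to the average $u_B$ of $u$ over a common ball. Since $\xB\sim\yB$ forces $d(\xB,\yB)\le 4r$, both balls $B(\xB,r/4)$ and $B(\yB,r/4)$ lie in $B:=B(\xB,5r)$. Passing from an average over $B(\xB,r/4)$ to an average over $B$ by doubling, and then applying the $2$-Poincar\'e inequality on $B$, gives
\[
|u_r(\xB)-u_B|+|u_r(\yB)-u_B|\le C\,r\left(\jint_B g_u^2\,d\mu\right)^{1/2},
\]
hence $\tfrac{1}{r^2}|u_r(\yB)-u_r(\xB)|^2\le \tfrac{C}{\mu(B)}\int_B g_u^2\,d\mu$. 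Multiplying by $\muB(\xB)=\mu(B(\xB,2r))\asymp\mu(B)$ (doubling) and summing over the at most $N_d$-many neighbors $\yB$ of a fixed $\xB$ yields $\sum_{\yB\sim\xB}\tfrac{1}{r^2}|u_r(\yB)-u_r(\xB)|^2\,\muB(\xB)\le C\int_{B(\xB,5r)}g_u^2\,d\mu$. Finally, because the centers $\xB$ are $r$-separated, the balls $B(\xB,5r)$ have overlap bounded in terms of $N_d$, so summing over $\xB\in X_r$ gives $\EC_r[u]\le C\,E[u]$ with $C=C(C_d,N_d,C_P)$.

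For the reverse inequality I would argue locally and then apply Fatou's lemma. The disjoint balls $\{B(\xB,r/2)\}_{\xB\in X_r}$ essentially cover $X$, and I would bound the local graph energy density at $\xB$, namely $\sum_{\yB\sim\xB}\tfrac{1}{r^2}|u_r(\yB)-u_r(\xB)|^2$, below by $c\,(\Lip u(\xB))^2$ up to errors that vanish with $r$. Continuity of $\Lip u$ enters in two ways: to control the averaging error via $|u_r(\xB)-u(\xB)|\le \tfrac{r}{4}\sup_{z\in B(\xB,r/4)}\tfrac{|u(z)-u(\xB)|}{d(z,\xB)}$, whose supremum tends to $\Lip u(\xB)$, and to replace $\Lip u$ by its value $\Lip u(\xB)$ uniformly on the small balls involved. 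To produce a neighbor whose difference quotient sees $\Lip u(\xB)$, I would use the geodesic structure: choose $a,b\in B(\xB,r)$ with $d(a,b)\gtrsim r$ and $|u(a)-u(b)|\ge(\Lip u(\xB)-\delta)\,d(a,b)$ (available since $\Lip u$ equals the asymptotic Lipschitz constant), connect them by a geodesic meeting only boundedly many graph balls $B(\zB_i,r)$, and telescope along the resulting short $X_r$-chain. A Cauchy--Schwarz step over this chain of bounded length bounds $|u(a)-u(b)|^2$, and hence $(\Lip u(\xB))^2\,r^2$, by $r^2$ times the graph energy contributions of the vertices near $\xB$ (plus averaging errors). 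Summing over the disjoint balls and invoking Fatou then gives $E[u]=\int_X(\Lip u)^2\,d\mu\le C\,\liminf_{r\to0^+}\EC_r[u]$.

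The main obstacle is the reverse inequality, specifically recovering the full local Lipschitz constant from discrete neighbor differences. The difficulty is twofold. First, the averaging errors $|u_r-u|$ are of the same order $r\,\Lip u$ as the genuine increments $u(\yB)-u(\xB)$, so they cannot simply be discarded; this is exactly what forces the comparison constant $C>1$ rather than an equality. Second, one must guarantee a graph neighbor, or a short chain, in the ``direction'' realizing $\Lip u(\xB)$ without catastrophic cancellation against those errors. Controlling this interplay uniformly as $r\to0^+$, using the continuity of $\Lip u$ together with the geodesic and $2$-Poincar\'e structure to rule out the loss of the full gradient, is the heart of the argument.
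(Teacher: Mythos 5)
Your proof of the first inequality $\EC_r[u]\le C\,E[u]$ is correct and is essentially the paper's argument: compare both averages to $u_{B(\xB,5r)}$ using doubling, apply the $2$-Poincar\'e inequality on $B(\xB,5r)$, and sum using the bounded overlap of these balls. The gap is in the reverse inequality, and it sits exactly where you say the heart of the argument is. Your plan hinges on finding, for every small $r$ and every $\xB$, points $a,b\in B(\xB,r)$ with $d(a,b)\gtrsim r$ and $|u(a)-u(b)|\ge(\Lip u(\xB)-\delta)\,d(a,b)$. But $\Lip u(x)$ is only a limit of suprema of difference quotients as $y\to x$: it is attained (up to a fixed factor) at \emph{some} scale $r_2=d(x,y)$ depending on $x$, not at every sufficiently small scale; a function oscillating at a fine scale $\delta\ll r$ can have increments $O(\delta)$ at scale $r$ while $\Lip u\sim 1$ there. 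Moreover, even where such a pair exists at scale $\sim r$, the averaging error $|u_r(\xB)-u(\xB)|$ is of size up to $\tfrac{3r}{8}\Lip u(\xB)$ per endpoint (plus the $O(r\,\Lip u)$ error from moving $a,b$ to nearby net points), i.e.\ of the same order as the main increment, so the single-scale cancellation cannot be controlled --- as you yourself observe. The paper resolves both problems in Lemma~\ref{lem_4} by decoupling the scales: for each $x$ it first fixes a radius $r_2(x)$ at which the difference quotient is at least $\tfrac23\Lip u(x)$, takes the averaging clusters inside balls of radius $r_2/9\ll r_2$ so that each averaging error is at most $\tfrac16\Lip u(x)$ and a margin of $\tfrac13\Lip u(x)$ survives, and only then lets the graph scale $\eps\le r_2/36$ enter.

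Once the scales are decoupled, the two relevant vertex clusters $E(x,r_2/9)$ and $E(y,r_2/9)$ are of order $r_2/\eps$ graph edges apart, so they are not neighbors and your ``short $X_r$-chain plus Cauchy--Schwarz'' step does not apply; telescoping along a single chain of length $N\sim r_2/\eps$ also only samples $u_\eps$ along a one-dimensional path and loses the measure weighting needed to compare with $\EC_\eps[u]$. The paper's substitute is the discrete Poincar\'e inequality of Gill--L\'opez (Proposition~\ref{prop:GL}), which converts the averaged long-range difference over $G(x_i,r(B_i))$ into the neighbor-difference energy over $G(x_i,\theta r(B_i))$; this imported tool is what your sketch is missing, and it is also why the constant $C$ depends on $\theta$. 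Finally, because the admissible graph scales $\eps\le\eps_0(x)$ from Lemma~\ref{lem_4} depend on $x$, one cannot simply ``invoke Fatou'': the paper uses compactness, a $5r$-covering argument producing pairwise disjoint dilated balls $\lambda B_i$ whose $\theta$-enlargements stay disjoint, and uniform continuity of $\Lip u$ with an absorbable error term, to reduce to finitely many centers $x_i$ and a single admissible threshold $\eps\le\min_i\eps_0(x_i)$ before passing to the $\liminf$.
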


This proposition will be proved using Lemma~\ref{lem_4} below.

\begin{lem}\label{lem_4}
Let $u\in N^{1,2}(X)$ be a locally Lipschitz function such that $\Lip u$ is a continuous function on $X$. 
Then for any $r_0>0$ and any $x\in X$ 
there exists a ball $B_x$, centered at $x$ and 
of radius $r(B_x)\leq r_0$, such that for all $\eps\leq \rad(B_x)/36$ we have
\begin{equation} \label{eq_lem1}
\Lip u(x)\lesssim
\sum_{\xB,\yB\in E(x,r(B_x))} \frac{|u_\eps(\xB)-u_\eps(\yB)|}{r(B_x)} \frac{\mu_\eps(\xB) \mu_\eps(\yB)}{\mu(B_x)^2},
\end{equation}
where we set 
\[
E(x,r(B_x))=\{\xB\in X_\eps: B(\xB;\eps)\subset B_x\}.
\] 
\end{lem}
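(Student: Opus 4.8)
The plan is to reduce the claimed pointwise bound to producing, near $x$, two \emph{clusters} of vertices of $X_\eps$ on which the averaged values $u_\eps$ are separated by an amount comparable to $\Lip u(x)\,\rad(B_x)$, each cluster carrying $\mu_\eps$-mass comparable to $\mu(B_x)$. I may assume $\ell:=\Lip u(x)>0$, since otherwise the right-hand side is nonnegative and there is nothing to prove. First I would use the continuity of $\Lip u$ to fix a radius $R>0$ with $\Lip u\le \tfrac{11}{10}\ell$ on $B(x,R)$; since $\Lip u$ is an upper gradient of the locally Lipschitz $u$ and $X$ is geodesic, integrating along geodesics gives $|u(a)-u(b)|\le \tfrac{11}{10}\ell\, d(a,b)$ for all $a,b\in B(x,R/3)$ (whose connecting geodesics remain in $B(x,R)$). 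On the other hand, since the supremum $\sup_{0<d(z,x)<\rho}\frac{|u(z)-u(x)|}{d(z,x)}$ decreases to $\ell$ as $\rho\to 0^+$, it is $\ge\ell$ for every $\rho$, so for every prescribed small scale there is a point $y$ with $s:=d(x,y)$ as small as I wish and $|u(y)-u(x)|\ge \tfrac{9}{10}\ell\, s$. I would then set $B_x:=B(x,4s)$, choosing $s$ small enough that $4s\le r_0$ and that $B_x$ together with the relevant geodesics lies inside the good ball $B(x,R)$; this fixes $\rad(B_x)=4s$, so the hypothesis $\eps\le \rad(B_x)/36$ reads $\eps\le s/9$.

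Next comes the heart of the matter. A single pair $(\xB,\yB)$ with $\xB$ near $x$ and $\yB$ near $y$ does realize $|u_\eps(\xB)-u_\eps(\yB)|\gtrsim \ell s$, but its weight $\mu_\eps(\xB)\mu_\eps(\yB)/\mu(B_x)^2$ degenerates as $\eps\to0^+$, so one term cannot suffice; instead I would use the whole clusters $E_x:=X_\eps\cap B(x,s/8)$ and $E_y:=X_\eps\cap B(y,s/8)$. For $\xB\in E_y$ every point of the averaging ball $B(\xB,\eps/4)$ lies in $B(y,s/8+\eps/4)\subset B(y,s/6)$, whence by the local Lipschitz bound $u_\eps(\xB)\ge u(y)-\tfrac{11}{10}\ell\cdot\tfrac{s}{6}=:\tau_+$; symmetrically $u_\eps(\yB)\le u(x)+\tfrac{11}{10}\ell\cdot\tfrac{s}{6}=:\tau_-$ for $\yB\in E_x$. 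The separation is then $\tau_+-\tau_-\ge |u(y)-u(x)|-\tfrac{11}{30}\ell s\ge\big(\tfrac{9}{10}-\tfrac{11}{30}\big)\ell s\gtrsim \ell s$, which is the decisive positivity (and forces $E_x\cap E_y=\emptyset$). A direct distance check shows $E_x,E_y\subset E(x,\rad(B_x))$, and since the $\eps$-balls of $X_\eps$ cover $X$ while $\mu_\eps(\xB)=\mu(B(\xB,2\eps))$, the $\eps$-density of $X_\eps$ together with doubling gives $\sum_{\xB\in E_y}\mu_\eps(\xB)\ge \mu(B(y,s/8-\eps))\gtrsim \mu(B_x)$, and likewise for $E_x$; write $W_\pm$ for the resulting mass ratios, so $W_+,W_-\gtrsim 1$.

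Finally I would discard all (nonnegative) terms outside $E_y\times E_x$ and factor, using that $u_\eps(\xB)-u_\eps(\yB)\ge\tau_+-\tau_->0$ there:
\[
\sum_{\xB,\yB\in E(x,\rad(B_x))}\frac{|u_\eps(\xB)-u_\eps(\yB)|}{\rad(B_x)}\,\frac{\mu_\eps(\xB)\,\mu_\eps(\yB)}{\mu(B_x)^2}\ \ge\ \frac{\tau_+-\tau_-}{\rad(B_x)}\cdot\frac{\big(\sum_{\xB\in E_y}\mu_\eps(\xB)\big)\big(\sum_{\yB\in E_x}\mu_\eps(\yB)\big)}{\mu(B_x)^2}.
\]
The right-hand side equals $\tfrac{\tau_+-\tau_-}{\rad(B_x)}\,W_+W_-\gtrsim \tfrac{\ell s}{4s}\gtrsim \ell=\Lip u(x)$, with the implied constant depending only on the doubling constant of $\mu$. \textbf{The main obstacle} is precisely the tension in the choice of the cluster radius: it must exceed $\eps$ (which ranges up to $\rad(B_x)/36$) so that each cluster contains enough vertices to accumulate $\mu_\eps$-mass comparable to $\mu(B_x)$, yet it must be small compared to $s$ so that the oscillation of $u$ inside a cluster cannot consume the gap $|u(y)-u(x)|\approx\ell s$. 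The room to satisfy both is created by selecting the scale $s=d(x,y)$ from the definition of $\Lip u(x)$ and then pinning $\rad(B_x)$ to be comparable to $s$ (here $\rad(B_x)=4s$), which is what forces the numerology behind the constant $36$. Note that, unlike the reverse comparison, this direction uses only doubling and the continuity of $\Lip u$, not the Poincar\'e inequality.
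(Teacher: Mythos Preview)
Your proposal is correct and follows essentially the same approach as the paper's proof: both fix a nearby point $y$ at which the difference quotient is close to $\Lip u(x)$, place one cluster of vertices of $X_\eps$ near $x$ and another near $y$, use the local Lipschitz control (from continuity of $\Lip u$) to show the averaged values $u_\eps$ on the two clusters are separated by an amount $\gtrsim \Lip u(x)\cdot\rad(B_x)$, use doubling to show each cluster carries $\mu_\eps$-mass $\approx \mu(B_x)$, and then sum over the cross product. The only cosmetic difference is numerology: you take $\rad(B_x)=4s$ with $s=d(x,y)$ and cluster radius $s/8$, whereas the paper takes $\rad(B_x)=r_2=d(x,y)$ with cluster balls of radius $r_2/9$; your choice makes the containment $E_y\subset E(x,\rad(B_x))$ immediate.
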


\begin{proof}
We fix $r_0>0$ and $x\in X$. Without loss of generality we may assume that $\Lip u(x)>0$. Then by the continuity of
$\Lip u$, there is some positive number $r_1\leq r_0$ such that 
\begin{equation} \label{lem3_eq0}
\sup_{y\in B(x,r_1)} |\Lip u(y) - \Lip u(x)| \leq \frac{1}{2}\Lip u(x). 
\end{equation}  
Next, by the definition of $\Lip u(x)$, there exists a positive number $r_2\leq r_1/2 $ and 
a point $y\in B(x,r_2)$ such that 
\begin{equation}\label{eq:Lip-at-x-1}
\sup_{x'\in B(x,r_2)} \frac{|u(x') - u(x)|}{d(x,x')} \leq \frac32  \Lip u(x) 
\end{equation}
and
\begin{equation}\label{eq:Lip-at-x}
\frac{|u(y) - u(x)|}{r_2} \geq \frac{2}{3}  \Lip u(x).
\end{equation}
To see this claim, we argue as follows. 
By the definition of $\Lip u(x)$ we know that there is some $y\in B(x,r_1/2)$ such that $y\ne x$ and 
\[
\frac{|u(y)-u(x)|}{d(y,x)}\ge \frac23 \Lip u(x).
\]
Setting $r_2=d(x,y)$, we obtain~\eqref{eq:Lip-at-x}. By~\eqref{lem3_eq0} we know that
\[
\sup_{w\in B(x,r_1)}\, \Lip u(w)\le \frac32 \Lip u(x),
\]
and as for each $x'\in B(x,r_2)$ we know that $B(x',r_2)\subset B(x,r_1)$, we know that $u$ is $\frac32 \Lip u(x)$-Lipschitz
continuous on $B(x,r_2)$, and so~\eqref{eq:Lip-at-x-1} follows.

We set $B_x:=B(x,r_2)$ and will show that \eqref{eq_lem1} holds for $\eps\leq r_2/36$. 
In order to do this,  consider the balls $B_1=B(x,r_2/9)$ and $B_2=B(y,r_2/9)$. Then by the choice of 
$r_2$ and by~\eqref{eq:Lip-at-x-1}, for all $x'\in B_1$ 
we have
\[
|u(x')-u(x)|\le \frac32\, \Lip u(x)\, d(x,x')<\frac{r_2}{6}\, \Lip u(x).
\]
Therefore
\begin{equation*}
\frac{|u(x') - u(x)|}{r_2} \leq 
	\frac{1}{6} \Lip u(x).
\end{equation*}
Similarly, because $X$ is a geodesic space and $\Lip u$ is an upper gradient of the Lipschitz function $u$ 
(see~\cite{HeinonenLectures}, \cite{HKSTbook}) and by~\eqref{lem3_eq0} we see that
for all $y'\in B_2$,
\[
|u(y')-u(y)|\le \sup_{z\in B(y,r_2)}\Lip u(z)\ d(y,y')\le \frac32\, \Lip u(x)\  \frac{r_2}{9}=\frac{r_2}{6}\, \Lip u(x).
\]	
Note that $d(x,y)=r_2$, and so 
\[
E(x,r_2/9):= \left \{\xB\in X_\eps: B(\xB,\eps)\subset B_1\right\} \subset E(x,r_2),
\] 
\[
E(y,r_2/9) := \left \{\xB\in X_\eps: B(\xB,\eps)\subset B_2\right \} \subset E(x,r_2).
\]
For $\xB\in E(x,r_2/9)$ and $\yB\in E(y,r_2/9)$	we have 
\[
\frac{|u_\eps(\xB)-u(x)|}{r_2} \leq \fint_{B(\xB,\eps/4)} \frac{|u(x')-u(x)|}{r_2}\, d\mu(x') \leq \frac{1}{6} \Lip u(x)
\]
and 
\[
\frac{|u_\eps(\yB)-u(y)|}{r_2} \leq \fint_{B(\yB,\eps/4)} \frac{|u(y')-u(y)|}{r_2} d\mu(y') \leq \frac{1}{6} \Lip u(x).
\] 
Hence by~\eqref{eq:Lip-at-x}, for all $\xB\in E(x,r_2/9)$ and $\yB\in E(y,r_2/9)$,
\begin{equation}\label{eq:Lip-bd}
\frac{|u_\eps(\xB) - u_\eps(\yB) |}{r_2}\geq \frac{1}{3} \Lip u(x).
\end{equation}
		 
As the balls $B(\xB,\eps/2)$ are disjoint in $X$ and the measure $\mu$ is doubling, we have
\begin{align*}
\mu_\eps(E(x,r_2/9))=\sum_{\xB \in E(x,r_2/9)} \mu_\eps(\xB) 
		 &= \sum_{\xB \in E(x,r_2/9)} \mu(B(\xB,\eps))\\
		 & \lesssim \sum_{\xB \in E(x,r_2/9)} \mu(B(\xB;\eps/2)) \\
		 &\leq \mu(B_1).
\end{align*}
Similarly we obtain
\[
\mu_\eps(E(y,r_2/9))\lesssim \mu(B_2).
\]
On the other hand, as $\eps\leq r_2/36$ and $X_\eps$ is an $\eps$-net, we have that
\[
\tfrac12 B_1\subset \bigcup_{\xB\in E(x,r_2/9)}B(\xB;\epsilon)\ \text{ and }\
\tfrac12 B_2\subset \bigcup_{\yB\in E(y,r_2/9)}B(\yB;\epsilon).
\]

Therefore 
\begin{align*}
\mu(B_1)&\lesssim\mu(\tfrac12 B_1)\le \sum_{\xB\in E(x,r_2/9)}\mu(B(\xB;\eps))=\mu_\eps(E(x,r_2/9)), \\
\mu(B_2)&\lesssim\mu(\tfrac12 B_2)\le \sum_{\yB\in E(y,r_2/9)}\mu(B(\yB;\eps))=\mu_\eps(E(y,r_2/9)).
\end{align*}
Combining the above two sets of comparisons, we obtain
\[
\mu_\eps(E(x,r_2/9))\approx \mu(B_1)\  \text{ and }\  \mu_\eps(E(y,r_2/9))\approx\mu(B_2),
\]
and as $r(B_1)=r(B_2)=r(B_x)/9$ with the centers $x$ and $y$, of $B_1$ and $B_2$ respectively, in $B_x$, we obtain
from the doubling property of $\mu$ that 
\[
\mu_\eps(E(x,r_2/9))\approx \mu(B_x) \ \text{ and }\ \mu_\eps(E(y,r_2/9)) \approx \mu(B_x). 	 
\]
		 Finally, by~\eqref{eq:Lip-bd} we obtain 
\begin{align*}
\Lip u(x)&\leq 3 \sum_{\xB\in E(x,r_2/9)} 
\sum_{\yB\in E(y,r_2/9)} \frac{|u_\eps(\xB) - u_\eps(\yB) |}{r_2} \frac{\mu_\eps(\xB)}{\mu_\eps(E(x,r_2/9))} 
  \frac{\mu_\eps(\yB)}{\mu_\eps(E(y,r_2/9))}  \\
& \leq 3 \sum_{\xB,\yB\in E(x,r(B_x))}  \frac{|u_\eps(\xB) - u_\eps(\yB) |}{r_2} 
   \frac{\mu_\eps(\xB)}{\mu_\eps(E(x,r_2/9))} \frac{\mu_\eps(\yB)}{\mu_\eps(E(y,r_2/9))} \\
& \lesssim\sum_{\xB,\yB\in E(x,r(B_x))}  \frac{|u_\eps(\xB) - u_\eps(\yB) |}{r_2} 
   \frac{\mu_\eps(\xB)\, \mu_\eps(\yB)}{\mu(B_x)^2}. 
\end{align*}
\end{proof}

Now we are ready to prove Proposition~\ref{prop:Dirich-upper}.

\begin{proof}[Proof of Proposition~\ref{prop:Dirich-upper}]
Let $u\in N^{1,2}(X)$ be a locally Lipschitz continuous function on $X$ such that its local Lipschitz constant
function $\Lip u$ is also continuous on $X$.
Without loss of generality, we assume that $\int_X (\Lip u)^2\, d\mu$ is positive, for otherwise $u$ is constant and all
the relevant Dirichlet energies are zero. 
Now we introduce a large parameter $\lambda>1$ that we will fix at the end (indeed, it can be 
any choice of $\lambda>12\theta+1$ with  
$\theta$ from the Poincar\'e inequality given in Proposition~\ref{prop:GL} above).
Let $K$ be a closed ball of sufficiently large radius so that 
\[
\int_X (\Lip u)^2 \ d\mu \leq 2 \int_K (\Lip u)^2 \  d\mu.  
\]  
Note that as $X$ is complete and doubling, it is proper; hence $K$ is compact.
By uniform continuity of $\Lip u$ on $10\lambda K$, we can find $r_1\in (0,1)$ small enough so that 
$2r_1$ is less than the radius of $K$ and 
\begin{equation}\label{eq:Lip-unif-cont}
	\sup_{\substack{x,y\in 10\lambda K\\ d(x,y)<2r_1}} |(\Lip u(x))^2-(\Lip u(y))^2| 
\leq \frac{1}{\Lambda}\fint_K (\Lip u)^2 \ d\mu.
\end{equation}
Here we have fixed a number $\Lambda>2$, to be chosen later.

For each $x\in K$ we find a ball $B_x$ satisfying the conclusions of Lemma~\ref{lem_4} 
with the choice of $r_0=r_1/(5\lambda)$. Such a family of balls is a cover of the compact set $K$, and so we can
obtain a a finite subcover of $K$. Moreover, by the $5r$-covering lemma 
(see e.g.~\cite[Theorem~1.2]{HeinonenLectures} or~\cite[page~60]{HKSTbook}), we can further find a 
subcollection of balls $\{B_i\}_{i=1}^N$ such that 
\begin{equation}
\{\lambda B_i\} \text{ are disjoint and } \bigcup_{i=1}^N 5\lambda B_i \supset K.
\end{equation}
With this subcover, we get 
\[
\int_K (\Lip u)^2 \ d\mu \leq \sum_{i=1}^N \left(\fint_{5 \lambda B_i} (\Lip u)^2 \ d\mu\right) \cdot \mu(5 \lambda B_i),
\]
where the balls $B_i$ satisfy the conclusions of Lemma~\ref{lem_4}. We denote the centers of these balls by $x_i$. 
Since the radii of each $5\lambda B_i$ is no more than $r_1$ and the center of $5\lambda B_i$ lies in $K$, 
it follows from~\eqref{eq:Lip-unif-cont} that
\[
\fint_{5 \lambda B_i} (\Lip u)^2 \ d\mu\le (\Lip u(x_i))^2+\frac{1}{\Lambda}\, \fint_K(\Lip u)^2\, d\mu.
\] 
We now obtain from the above observations that
\begin{align*}
\int_K (\Lip u)^2\, d\mu &\le \sum_{i=1}^N (\Lip u(x_i))^2 \cdot \mu(5 \lambda B_i)
   +\left(\fint_K(\Lip u)^2\, d\mu\right)\, \sum_{i=1}^N\mu(5 \lambda B_i)\\
   &\lesssim \sum_{i=1}^N (\Lip u(x_i))^2 \cdot \mu(5 \lambda B_i)+\frac{1}{\Lambda}\, \fint_K(\Lip u)^2\, d\mu\sum_{i=1}^N\mu(B_i)\\
   &\le \sum_{i=1}^N (\Lip u(x_i))^2 \cdot \mu(5 \lambda B_i)+\frac{\mu(2K)}{\Lambda}\, \fint_K(\Lip u)^2\, d\mu\\
   &\le \sum_{i=1}^N (\Lip u(x_i))^2 \cdot \mu(5 \lambda B_i)+\frac{1}{\Lambda}\, \int_K(\Lip u)^2\, d\mu.
\end{align*}
The last few steps in the above argument was arrived at by using the fact that the balls $B_i$ are pairwise disjoint and are
contained in $2K$. The comparison constant associated with the above series of inequalities, $C_*$, depends solely on 
the doubling constant of $\mu$ and the choice of $\lambda$.
We choose $\Lambda\ge 2C_*$. It follows that $C_*/\Lambda\le 1/2$, and so
\[
\int_K(\Lip u)^2\, d\mu\le 2\, C_*\, \sum_{i=1}^N (\Lip u(x_i))^2 \cdot \mu(5 \lambda B_i).
\]
 	
Recall that in Lemma~\ref{lem_4}, the parameter $\eps_0$ depended on the radius of the ball $B_x$.
We will indicate this dependence by denoting $\eps_0$ by $\eps_0(x)$.
Therefore, choosing any $\eps\leq \min_{1\leq i\leq N} \eps_0(x_i)$, we get 
\begin{align*}
\int_K (\Lip u)^2 \ d\mu &\lesssim \sum_{i=1}^N  
\left[\sum_{\xB, \yB \in {E(x_i,r(B_{i}))}} \frac{|u_\eps(\xB)-u_\eps(\yB)|}{r(B_i)} \frac{\mu_\eps(\xB) \mu_\eps(\yB)}{\mu(B_i)^2 } \right]^2 \mu(B_i)\\
&\lesssim \sum_{i=1}^N\left[\sum_{\xB\in E(x_i,r(B_i))}\frac{|u_\eps(\xB)-(u_{\eps})_{G(x_i,r(B_i))}|}{r(B_i)} \frac{\mu_\eps(\xB)}{\mu(B_i)}\right]^2\mu(B_i),
\end{align*}
where, as in Lemma~\ref{lem_4}, we have
\[
\sum_{\yB\in E(x_i,r(B_i))}\mu_\eps(\yB)\approx \mu(B_i)
\]
and
\[
E(x_i,r(B_i))=\{\xB\in X_\eps\, :\, B(\xB;\eps)\subset B_i\}\subset G(x_i,r(B_i))=\{\xB\in X_\eps\, :\, d_\eps(x_i,\xB)\le r(B_i)\}.
\]
By the doubling property of $\mu$ again, we have that $\mu_\eps(G(x_i,r(B_i)))\approx\mu(B_i)$.
Now we apply Proposition~\ref{prop:GL}, to obtain
\begin{align*}
\int_K (\Lip u)^2 \ d\mu &\lesssim \sum_{i=1}^N\left[\sum_{\xB\in G(x_i,\theta r(B_i))}\, 
\sum_{\yB\sim\xB}\frac{|u_\eps(\xB)-u_\eps(\yB)|^2}{\eps^2}\, \frac{\mu_\eps(\xB)}{\mu(B_i)}\right]\mu(B_i)\\
&\lesssim \sum_{i=1}^N\left[\sum_{\xB\in G(x_i,\theta r(B_i))}\, 
\sum_{\yB\sim\xB}\frac{|u_\eps(\xB)-u_\eps(\yB)|^2}{\eps^2}\, \mu_\eps(\xB)\right]\\
&\lesssim \sum_{\xB\in X_\eps}\sum_{\yB\sim\xB}\, \left[\sum_{i=1}^N\chi_{G(x_i,\theta r(B_i))}(\xB)\right]\, 
\frac{|u_\eps(\xB)-u_\eps(\yB)|^2}{\eps^2}\, \mu_\eps(\xB),
\end{align*}
where we used Tonelli's theorem to obtain the last inequality. Since $\lambda>12\theta+1$ and the balls $\lambda B_i$, $i=1,\cdots, N$ are
pairwise disjoint, and as for each $i$ we have that $G(x_i,\theta r(B_i))\subset 2\theta B_i\subset \lambda B_i$, we have that
\[
\int_K (\Lip u)^2 \  d\mu
\lesssim \sum_{\xB\in X_\eps}\sum_{\yB\sim\xB}\,  
\frac{|u_\eps(\xB)-u_\eps(\yB)|^2}{\eps^2}\, \mu_\eps(\xB)=\mathcal{E}_\eps[u].
\]
The above holds for all sufficiently small $\eps>0$, and so we have that
\[
\int_Xg_u^2\, d\mu=\int_X(\Lip u)^2\, d\mu\le 2\, \int_K(\Lip u)^2\, d\mu\lesssim \liminf_{\eps\to 0^+}\mathcal{E}_\eps[u],
\]
which verifies the claimed latter inequality in the statement of the proposition.

We now complete the proof of the proposition by verifying the first inequality stated in the claim.
To this end, let $u\in N^{1,2}(X)$. We fix $r>0$ such that $r<\text{diam}(X)/2$.
Recall that for distinct $\xB,\yB\in X_r$ with $\xB\sim \yB$ we have $d(\xB,\yB)\in [r,4r)$. Thus
\[
B(\yB;r) \subset B(\xB;5r)\subset B(\yB;9r).
\]
As $\mu$ is doubling, it follows that 
\[
\frac{\mu(B(\xB,5r))}{\mu(B(\yB,r))}\le \frac{\mu(B(\yB,9r))}{\mu(B(\yB,r))}\le C_d^4\ \text{ and }\ 
\frac{\mu(B(\xB,5r))}{\mu(B(\xB,r))}\le C_d^3.
\]
Hence, if $\xB\sim \yB$, then 
\begin{align*}
|u_r(\xB) - u_r(\yB)|  &\leq  \fint_{B(\xB;r/4)} \fint_{B(\yB;r/4)} |u(x) - u(y)| d\mu(y) d\mu(x) \\ 
&  \lesssim \fint_{B(\xB;5r)} \fint_{B(\xB;5r)} |u(x) - u(y)| d\mu(y) d\mu(x) \\ 
& \lesssim \fint_{B(\xB;5r)} |u(x) - u_{B(\xB;5r)}|  d\mu(x).
\end{align*}
So 
\begin{align*}
\EC_r[u]  &= \sum_{\xB\in X_r} \sum_{\xB\sim \yB} \left | \frac{u_r(\xB) - u_r(\yB)}{r} \right |^2 \muB(\xB) \\  
& \lesssim N_d\cdot  \sum_{\xB\in X_r} \left |(5r)^{-1} \fint_{B(\xB;5r)} |u(x) - u_{B(\xB;5r)}|  d\mu(x) \right |^2  \muB(\xB),   
\end{align*}
where $N_d$ is the doubling constant of the metric space $(X,d)$.   
Next, applying the Poincar\'e inequality, we get 
\begin{align*}
\EC_r[u]  \lesssim \sum_{\xB\in X_r} \fint_{B(\xB;5r)} (\Lip u)^2 \  d\mu \cdot \mu(B(\xB;r)) 
&\lesssim \sum_{\xB\in X_r} \int_{B(\xB;5 r)} (\Lip u)^2 \  d\mu \\
& \leq   N_{d}^3\,  \int_{X} (\Lip u)^2 \  d\mu, 
\end{align*}
where we have used the fact that two balls $B(\xB,5r)$ and $B({\xB}',5r)$ intersect with $\xB\ne {\xB}'$ if and only if
$r\le d(\xB,{\xB}')<5r$. 
The constants of comparison in the above inequalities all depend only on the doubling constant $C_d$ of 
the measure $\mu$, and in particular are independent of $u$ and $r$.
\end{proof}

\begin{thm}\label{thm:Dirich-Lowerbound}
Let $X$ be a doubling metric measure space admitting a $2$-Poincar\'e inequality. 
Then there is a constant $C>0$ such that for each $u\in N^{1,2}(X)$
we have
\begin{align} \label{ineq.Dirich-lower-bound}
\frac{1}{C}\, \sup_{r>0} \EC_r[u]\le E[u]=\int_Xg_u^2\, d\mu\le C\, \liminf_{\eps\to 0^+}\EC_\eps[u].
\end{align}
Moreover, if $u_k$, $k\in\N$, is a sequence of functions in $N^{1,2}(X)$ with $u_k\to u$ in $N^{1,2}(X)$, then
$\lim_k\EC_r[u_k]=\EC_r[u]$ for each $r>0$.
\end{thm}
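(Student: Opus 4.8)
The first of the two inequalities asserted in~\eqref{ineq.Dirich-lower-bound} is immediate: Proposition~\ref{prop:Dirich-upper} gives $\EC_r[u]\le C\,E[u]$ for every $r>0$ and every $u\in N^{1,2}(X)$, so taking the supremum over $r>0$ yields $\tfrac1C\sup_{r>0}\EC_r[u]\le E[u]$. The content of the theorem is therefore the second inequality, $E[u]\le C\,\liminf_{\eps\to0^+}\EC_\eps[u]$, which Proposition~\ref{prop:Dirich-upper} supplies only for locally Lipschitz $u$ whose local Lipschitz constant $\Lip u$ is continuous and lies in $L^2(X)$. The plan is to upgrade this to all of $N^{1,2}(X)$ by approximation, and then to prove the final ``moreover'' clause separately from the bilinearity of $\EC_r$.

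The mechanism for the upgrade is that each $\EC_\eps$ is a positive semi-definite symmetric bilinear form on $N^{1,2}(X)$, so $w\mapsto\EC_\eps[w]^{1/2}$ is a seminorm and satisfies the triangle inequality. Combined with the uniform bound $\EC_\eps[w]\le C\,E[w]=C\int_X g_w^2\,d\mu$ of Proposition~\ref{prop:Dirich-upper}, this gives, for all $u,v\in N^{1,2}(X)$,
\[
\bigl|\EC_\eps[u]^{1/2}-\EC_\eps[v]^{1/2}\bigr|\le\EC_\eps[u-v]^{1/2}\le\sqrt{C}\,\|g_{u-v}\|_{L^2(X)},
\]
with a bound that is \emph{uniform in $\eps$}. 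Now fix $u\in N^{1,2}(X)$ and choose locally Lipschitz functions $u_k$, each with $\Lip u_k$ continuous and in $L^2(X)$, such that $u_k\to u$ in $N^{1,2}(X)$; then $\|g_{u-u_k}\|_{L^2(X)}\to 0$, and since $|g_{u_k}-g_u|\le g_{u_k-u}$ a.e.\ by subadditivity of minimal $2$-weak upper gradients, also $E[u_k]\to E[u]$. Applying the lower bound of Proposition~\ref{prop:Dirich-upper} to each $u_k$ and then the uniform estimate, we get
\[
E[u_k]^{1/2}\le\sqrt{C}\,\liminf_{\eps\to0^+}\EC_\eps[u_k]^{1/2}\le\sqrt{C}\,\liminf_{\eps\to0^+}\EC_\eps[u]^{1/2}+C\,\|g_{u-u_k}\|_{L^2(X)}.
\]
Letting $k\to\infty$ kills the last term and gives $E[u]^{1/2}\le\sqrt{C}\,\liminf_{\eps\to0^+}\EC_\eps[u]^{1/2}$, which after squaring is the desired inequality.

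For the continuity claim, fix $r>0$ and let $u_k\to u$ in $N^{1,2}(X)$. Expanding by bilinearity,
\[
\EC_r[u_k]-\EC_r[u]=\EC_r(u_k-u,u_k)+\EC_r(u,u_k-u),
\]
and estimating each term by the Cauchy--Schwarz inequality for the positive semi-definite form $\EC_r$ together with the upper bound of Proposition~\ref{prop:Dirich-upper} gives
\[
\bigl|\EC_r[u_k]-\EC_r[u]\bigr|\le\EC_r[u_k-u]^{1/2}\bigl(\EC_r[u_k]^{1/2}+\EC_r[u]^{1/2}\bigr)\le C\,\|g_{u_k-u}\|_{L^2(X)}\bigl(E[u_k]^{1/2}+E[u]^{1/2}\bigr).
\]
Since $\|g_{u_k-u}\|_{L^2(X)}\to 0$ while $E[u_k]\to E[u]$ stays bounded, the right-hand side tends to $0$, so $\lim_k\EC_r[u_k]=\EC_r[u]$.

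The one genuinely non-formal ingredient is the availability of the approximating sequence $u_k\to u$ in $N^{1,2}(X)$ consisting of locally Lipschitz functions with continuous $L^2$ local Lipschitz constant, and this is the step I expect to be the main obstacle. Lipschitz functions are dense in $N^{1,2}(X)$ for doubling spaces supporting a $2$-Poincar\'e inequality, but one must additionally arrange the \emph{continuity} of $\Lip u_k$ so that Proposition~\ref{prop:Dirich-upper} applies. I would produce such approximants by a discrete-convolution (Lipschitz partition of unity) construction adapted to the nets $X_\eps$, or invoke the corresponding density result directly; once this dense class is in hand, the remainder of the argument is the purely formal manipulation above.
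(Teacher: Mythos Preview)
Your proposal is correct and follows essentially the same approach as the paper: both deduce the second inequality by approximating $u$ in $N^{1,2}(X)$ by Lipschitz functions to which Proposition~\ref{prop:Dirich-upper} applies, then pass to the limit via the triangle inequality for the seminorm $\EC_\eps[\cdot]^{1/2}$ combined with the uniform bound $\EC_\eps[w]\le C\,E[w]$. For the approximants the paper invokes~\cite[Theorem~5.3]{Chee}, which produces compactly supported Lipschitz $u_k$ whose constructed upper gradients are themselves Lipschitz and equal to $\Lip u_k$, so continuity of $\Lip u_k$ comes for free; this resolves exactly the point you flagged as the main obstacle. For the ``moreover'' clause your route via Cauchy--Schwarz and Proposition~\ref{prop:Dirich-upper} is slightly cleaner than the paper's, which instead re-estimates $\EC_r[u_k-u]$ directly from the definition to get $\EC_r[u_k-u]\lesssim r^{-2}\|u_k-u\|_{L^2}^2$---a sharper statement (only $L^2$ convergence is needed, for fixed $r$) but more work than necessary here.
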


\begin{proof}
The first inequality follows immediately from Proposition~\ref{prop:Dirich-upper}, and so only the second inequality needs verification.
That is the focus of this proof.

Let $u\in N^{1,2}(X)$, and $(u_k)_k$ be a sequence of compactly supported Lipschitz functions on $X$ with compactly
supported Lipschitz upper gradients, such that $u_k\to u$ in $N^{1,p}(X)$; see~\cite[Theorem 5.3]{Chee} (note that in~\cite{Chee},
the continuous upper gradient $\hat{v}_{k,j(k)}$ used to construct the approximating function $u_k=\hat{f}_{k,j(k)}$,
by the construction of $u_k$, necessarily has the property that $\Lip u_k=\hat{v}_{k,j(k)}$). 
By passing to a subsequence if necessary, we may assume that  
$\Vert u-u_k\Vert_{N^{1,2}(X)}\le 1/k$.
Then by the first part of Proposition~\ref{prop:Dirich-upper} again, we have that
\[
\EC_r[u-u_k]\le C\, \int_Xg_{u-u_k}^2\, d\mu\le \frac{C}{k^2}.
\]
Note that the constant $C$ independent of $r$.
Therefore, for each $\eps>0$ we can find a positive integer $k_\eps$ such that whenever $k_\eps\le k\in\N$,
for all $r>0$ we have $1/k<\eps$ and 
\[
\EC_r[u-u_k]\le \eps^2.
\]
Now by the last inequality in the statement of Proposition~\ref{prop:Dirich-upper}, there is some
$r_\eps>0$ such that for each $0<r<r_\eps$, we have 
\[
\int_Xg_{u_k}^2\, d\mu\le C\, \EC_r[u_k].
\]
For such $r$ and $k$, we have by triangle inequality,
\begin{align*}
\EC_r[u]^{1/2}\ge \EC_r[u_k]^{1/2}-\EC_r[u-u_k]^{1/2} 
 \ge \frac{1}{C}\left(\int_Xg_{u_k}^2\, d\mu\right)^{1/2}-\eps. 
\end{align*}
The desired conclusion follows from letting $r\to 0^+$ and then $\eps\to 0$.
 
We now show that if $(u_k)_k$ is a sequence
in $N^{1,2}(X)$ such that $u_k\to u$ in $N^{1,2}(X)$, then for each $r>0$ we have that $\EC_r[u_k-u]\to 0$ as $k\to\infty$.
We fix $r>0$ and consider the balls $B_j$, $j\in\N$, related to the construction of $X_r$; namely the balls of the 
form $B(\xB,r)$, $\xB\in X_r$. Note that $\xB\sim \yB$ if and only if $B(\xB;2r)\cap B(\yB;2r)$ is nonempty.
Now setting $v_k=u_k-u$, we see that 
\begin{align*}
\EC_r[u_k-u]=\EC_r[v_k] 
&=\sum_{\xB\in X_r}\, \sum_{\yB\in X_r\, :\, \xB\sim \yB}\frac{|(v_k)_{B(\xB;r/4)}-(v_k)_{B(\yB;r/4)}|^2}{r^2}\mu(B(\xB;r))\\
&\le \frac{4}{r^2}\sum_{\xB\in X_r}\, \sum_{\yB\in X_r\, :\, \xB\sim \yB}\left((|v_k|)_{B(\xB;r/4)}^2+(|v_k|)_{B(\yB;r/4)}^2\right)\mu(B(\xB;r))\\
&\le \frac{4}{r^2}\sum_{\xB\in X_r}\, \sum_{\yB\in X_r\, :\, \xB\sim \yB}\left(\fint_{B(\xB;r/4)}|v_k|^2\, d\mu
  +\fint_{B(\yB;r/4)}|v_k|^2\, d\mu\right)\mu(B(\xB;r)).
\end{align*}
Note that if $\xB\sim \yB$, then $B(\yB;r)\subset B(\xB;5r)$; moreover, there is at most $N$ such $\yB\in X_r$ that are
neighbors of $\xB$, with $N$ independent of $\xB$ and $r$ (in fact, it depends solely on the metric doubling constant $N_d$).
Therefore by the doubling property of $\mu$ we have
\[
\EC_r[u_k-u]=\EC_r[v_k] 
\lesssim \frac{1}{r^2}\sum_{\xB\in X_r} \int_{B(\xB; 5r)}|v_k|^2\, d\mu\lesssim \frac{1}{r^2}\int_X|v_k|^2\, d\mu,
\]
where in the last step we have used the bounded overlap property $\sum_{\xB\in X_r}\chi_{B(\xB; 5r)}\le K$ on $X$
with $K$ depending solely on the doubling constant $N_d$. Thus, if 
$u_k\to u$ in $L^2(X)$, that is, $v_k\to 0$ in $L^2(X)$, it follows that $\EC_r[u_k-u]\to 0$ as well as $k\to\infty$.
Note that 
\[
\EC_r[u]^{1/2}-\EC_r[u_k-u]^{1/2}\le \EC_r[u_k]^{1/2}\le \EC_r[u]^{1/2}+\EC_r[u_k-u]^{1/2}.
\]
This completes the proof of the theorem; though note that the convergence of $\EC_r[u_k-u]$ to zero is not
uniform across $r>0$ as we have $r^2$ in the denominator of the above estimate.
\end{proof}

\begin{defn}\label{def:gamma-conv}
A sequence $\EC_{r_k}$ of Dirichlet forms on the Hilbert space $N^{1,2}(X)$ 
is said to $\Gamma$-converge to a Dirichlet form $\EC$ if
the following two conditions are satisfied:
\begin{enumerate}
\item[(i)] Whenever $(u_k)_k$ is a sequence in $N^{1,2}(X)$ and $u\in N^{1,2}(X)$ such that $u_k\to u$ in $N^{1,2}(X)$, 
we must have $\EC[u]\le \liminf_k \EC_{r_k}[u_k]$;
\item[(ii)] For each $u\in N^{1,2}(X)$ there is a sequence $(u_k)_k$ in $N^{1,2}(X)$ such that $u_k\to u$ in $N^{1,2}(X)$ and
$\EC[u]\ge \limsup_k\EC_{r_k}[u_k]$.
\end{enumerate}
Given the first condition above, the second condition is equivalent to knowing the existence of a sequence $(u_k)_k$ in 
$N^{1,2}(X)$ such that $\EC[u]=\lim_k\EC_{r_k}[u_k]$ and $u_k\to u$ in $N^{1,2}(X)$.
\end{defn}

\begin{prop}\label{prop:exist-gamma-lim}
There exists a $\Gamma$-convergent subsequence $\EC_{r_k}$ of $\EC_r$.
\end{prop}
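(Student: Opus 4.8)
The plan is to obtain the subsequence from the general compactness theorem for $\Gamma$-convergence, the only genuine inputs being that the ambient space is separable and that the forms $\EC_r$ are uniformly controlled by the upper gradient energy $E$. First I would record that, under our standing hypotheses (doubling measure supporting a $2$-Poincar\'e inequality), the Newtonian space $N^{1,2}(X)$ is a separable Banach space; this is part of the basic theory of Newtonian spaces on such $X$ (see~\cite{HKSTbook}), and can also be seen from the density of Lipschitz functions together with the compactness of $X$. Thus $(N^{1,2}(X),\Vert\cdot\Vert_{N^{1,2}(X)})$ is a separable metric space, which is exactly the setting in which abstract $\Gamma$-compactness applies.

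Next I would fix any sequence of scales $r_n\downto 0$ and view $\EC_{r_n}$ as a sequence of finite-valued, nonnegative functionals on the separable metric space $N^{1,2}(X)$. By the fundamental compactness theorem for $\Gamma$-convergence on separable metric spaces (see~\cite{Braides,DeGiorgiSpag}), there is a subsequence $\{r_k\}_k$ and a functional $\EC\colon N^{1,2}(X)\to[0,\infty]$ such that $\EC_{r_k}$ $\Gamma$-converges to $\EC$ in the topology of $N^{1,2}(X)$; concretely, one fixes a countable dense set $\{w_j\}_j\subset N^{1,2}(X)$ and extracts, by a diagonal argument, a subsequence along which the $\Gamma$-lower and $\Gamma$-upper limits coincide. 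By construction $\EC$ satisfies conditions (i) and (ii) of Definition~\ref{def:gamma-conv}: for every $u_k\to u$ in $N^{1,2}(X)$ we have $\EC[u]\le\liminf_k\EC_{r_k}[u_k]$, and for every $u\in N^{1,2}(X)$ there is a recovery sequence $u_k\to u$ with $\EC[u]=\lim_k\EC_{r_k}[u_k]$.

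It then remains to check that the domain of the limit is exactly $N^{1,2}(X)$ and that $\EC$ is comparable to $E$, which also confirms that the subsequence is nontrivial. For the upper bound, since the $\Gamma$-limit never exceeds the pointwise $\limsup$, testing against the constant sequence $u_k\equiv u$ and invoking Proposition~\ref{prop:Dirich-upper} gives
\[
\EC[u]\le\limsup_k\EC_{r_k}[u]\le \sup_{r>0}\EC_r[u]\le C\,E[u]<\infty,
\]
so $\EC$ is finite on all of $N^{1,2}(X)$. For the lower bound, take a recovery sequence $u_k\to u$ with $\EC[u]=\lim_k\EC_{r_k}[u_k]$. Since $u_k\to u$ in $N^{1,2}(X)$, Proposition~\ref{prop:Dirich-upper} yields $\EC_{r_k}[u-u_k]\le C\,E[u-u_k]\to0$, and the triangle inequality for the seminorms $\EC_{r_k}[\cdot]^{1/2}$ gives
\[
\EC[u]^{1/2}=\lim_k\EC_{r_k}[u_k]^{1/2}\ge\liminf_k\EC_{r_k}[u]^{1/2}\ge\Big(\liminf_{\eps\to0^+}\EC_\eps[u]\Big)^{1/2}\ge\Big(\tfrac1C E[u]\Big)^{1/2},
\]
where the last inequality is Theorem~\ref{thm:Dirich-Lowerbound}. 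Hence $\tfrac1C\EC[u]\le E[u]\le C\,\EC[u]$ for every $u\in N^{1,2}(X)$.

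The abstract compactness step is essentially a black box, so the real work is the two-sided comparison just carried out: it is what guarantees that the $\Gamma$-limit is finite, nondegenerate, and has domain precisely $N^{1,2}(X)$, rather than being identically $0$ or $+\infty$. I expect the main subtlety to lie in justifying separability of $N^{1,2}(X)$ so that the compactness theorem is applicable, and in keeping track of the fact that the forms are only defined on $N^{1,2}(X)$, so that $\Gamma$-convergence is taken with respect to the $N^{1,2}$-norm and not merely the $L^2$-norm. The fact that the resulting $\EC$ is genuinely a Dirichlet form---symmetric, bilinear, Markovian, and closed---does not follow from abstract $\Gamma$-compactness and is deferred to the lemmas of Section~\ref{Sec:4}, in particular Proposition~\ref{prop:Markov} and Lemma~\ref{lem:closed}.
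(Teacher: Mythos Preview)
Your approach is essentially the same as the paper's: separability of $N^{1,2}(X)$ plus the abstract $\Gamma$-compactness theorem. The paper cites~\cite{AlvarHajMal} for separability and~\cite[Theorem~8.5]{DalMaso} for compactness, and stops there; the comparability $\tfrac1C\,\EC\le E\le C\,\EC$ that you go on to derive is not part of this proposition but is handled separately in the paper as Proposition~\ref{prop:Comparability}, by the same triangle-inequality argument you give.
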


\begin{proof}
From~\cite{AlvarHajMal} we know that $N^{1,2}(X)$ is separable. 
Therefore, by~\cite[Theorem 8.5]{DalMaso}, 
there exists a $\Gamma$-convergent subsequence $\EC_{r_k}$ of $\EC_r$. 
From~\cite[Proposition~6.8]{DalMaso} we know that the $\Gamma$-limit is a closable bilinear energy form.
\end{proof}

We denote the $\Gamma$-limit of $\EC_{r_k}$ from the previous proposition by $\EC$. We now show that the 
energy form induced by the $\Gamma$-limit $\EC$ is comparable to the Newton-Sobolev energy form, thus proving
Condition~(2) of Theorem~\ref{thm:main1}.

\begin{prop} \label{prop:Comparability}
There is a constant $C>0$ such that the following is true for every $u\in N^{1,2}(X)$ and its minimal 
$2$-weak upper gradient $g_u\in L^2(X)$:
\begin{align} \label{eq:Comparability}
\frac{1}{C}\, \EC[u]\le \int_Xg_u^2\, d\mu\le C\, \EC[u].
\end{align}
\end{prop}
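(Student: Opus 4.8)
The plan is to read off both inequalities from the two-sided bound of Theorem~\ref{thm:Dirich-Lowerbound} together with the two defining properties of the $\Gamma$-limit in Definition~\ref{def:gamma-conv}. Throughout I write $E[u]=\int_X g_u^2\,d\mu$. The decisive structural feature I intend to exploit is that the comparison constant in the estimate $\EC_r[v]\le C\,E[v]$ coming from Proposition~\ref{prop:Dirich-upper} is \emph{independent of $r$}; it is this uniformity, rather than any new estimate, that makes the argument go through.

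For the inequality $\EC[u]\le C\,E[u]$ I would apply the $\liminf$-inequality (part~(i) of Definition~\ref{def:gamma-conv}) to the constant sequence $u_k\equiv u$, which trivially converges to $u$ in $N^{1,2}(X)$. This yields $\EC[u]\le\liminf_k\EC_{r_k}[u]$. Since $\sup_{r>0}\EC_r[u]\le C\,E[u]$ by Theorem~\ref{thm:Dirich-Lowerbound}, every term satisfies $\EC_{r_k}[u]\le C\,E[u]$, and hence $\EC[u]\le C\,E[u]$, which is the left-hand inequality of~\eqref{eq:Comparability}.

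The reverse inequality $E[u]\le C\,\EC[u]$ is the heart of the matter. Here I would invoke part~(ii) of Definition~\ref{def:gamma-conv} to choose a recovery sequence $u_k\to u$ in $N^{1,2}(X)$ with $\EC[u]=\lim_k\EC_{r_k}[u_k]$. Since each $\EC_{r_k}[\cdot]^{1/2}$ is a seminorm, the reverse triangle inequality gives
\[
\EC_{r_k}[u_k]^{1/2}\ge \EC_{r_k}[u]^{1/2}-\EC_{r_k}[u-u_k]^{1/2}.
\]
The error term is controlled uniformly in $k$: the $r$-independent bound of Proposition~\ref{prop:Dirich-upper} gives $\EC_{r_k}[u-u_k]\le C\,E[u-u_k]=C\int_X g_{u-u_k}^2\,d\mu$, and this tends to $0$ because $u_k\to u$ in $N^{1,2}(X)$ forces $\|g_{u-u_k}\|_{L^2}\to 0$. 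For the main term, since $\{r_k\}$ is a subsequence of $\eps\to0^+$ the monotonicity of $\liminf$ under passage to a subsequence gives $\liminf_k\EC_{r_k}[u]\ge\liminf_{\eps\to0^+}\EC_\eps[u]\ge\tfrac1C\,E[u]$, the last step being Theorem~\ref{thm:Dirich-Lowerbound}. Taking $\liminf_k$ in the displayed inequality, the error term drops out and continuity of the square root yields $\EC[u]^{1/2}=\lim_k\EC_{r_k}[u_k]^{1/2}\ge\liminf_k\EC_{r_k}[u]^{1/2}\ge(\tfrac1C\,E[u])^{1/2}$, whence $E[u]\le C\,\EC[u]$.

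The main obstacle, and the only place where more than formal $\Gamma$-convergence bookkeeping is needed, is the uniform smallness of the error term $\EC_{r_k}[u-u_k]^{1/2}$. This is exactly where the $r$-independence of the comparison constant in Proposition~\ref{prop:Dirich-upper} is indispensable: it allows the single $N^{1,2}$-estimate $\|g_{u-u_k}\|_{L^2}\to 0$ to annihilate the error simultaneously along the entire subsequence $\{r_k\}$, so that the reverse triangle inequality transfers the lower bound established for the constant sequence $u$ over to the recovery sequence $u_k$. Everything else reduces to the two halves of Theorem~\ref{thm:Dirich-Lowerbound} and elementary properties of $\liminf$.
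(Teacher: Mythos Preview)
Your argument is correct and follows essentially the same route as the paper: both halves rest on Theorem~\ref{thm:Dirich-Lowerbound}, with the recovery sequence, the reverse triangle inequality for $\EC_{r_k}[\cdot]^{1/2}$, and the $r$-uniform bound $\EC_r[u-u_k]\le C\,E[u-u_k]\to 0$ used exactly as the paper does for the lower bound on $E[u]$. The only cosmetic difference is in the upper bound $\EC[u]\le C\,E[u]$: you apply the $\Gamma$-$\liminf$ inequality to the constant sequence $u_k\equiv u$, whereas the paper runs the recovery sequence through lower semicontinuity of $\EC$ and the bound $\EC_{r_k}[u_k]\le C\,E[u_k]$; your version is slightly more direct but the content is the same.
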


\begin{proof}
If $u_k$, $k\in\N$, is the sequence guaranteed by the $\Gamma$-limit so that
$\lim_k\EC_{r_k}[u_k]=\EC[u]$ and $\Vert u_k-u\Vert_{N^{1,2}(X)}\to 0$ as $k\to\infty$, then we have that $\int_Xg_u^2\, d\mu=\lim_k\int_Xg_{u_k}^2\, d\mu$. By~\cite[Proposition~6.8]{DalMaso}, we know that the $\Gamma$-limit is lower semicontinuous. Then by Theorem~\ref{thm:Dirich-Lowerbound} we have that
\begin{align*}
\EC[u] \le \liminf_k \EC[u_k] \le C\liminf_k\int_Xg_{u_k}^2\, d\mu=C\int_Xg_u^2\, d\mu,
\end{align*}
and so $\EC[u]\le C\, \int_Xg_u^2\, d\mu$, which gives the left side of \eqref{eq:Comparability}. On the other hand, by Theorem~\ref{thm:Dirich-Lowerbound}, we have
\[
\EC[u]^{1/2}=\lim_k\EC_{r_k}[u_k]^{1/2}\ge \liminf_k\EC_{r_k}[u]^{1/2}-\limsup_k\EC_{r_k}[u-u_k]^{1/2}
\ge \left(\frac{1}{C}\int_Xg_u^2\, d\mu\right)^{1/2},
\]
where we have used the fact that (Again from Theorem~\ref{thm:Dirich-Lowerbound}),
\begin{equation}\label{eq:not-bright}
0\le\limsup_k\EC_{r_k}[u-u_k]\le \limsup_k\int_Xg_{u_k-u}^2\, d\mu=0.
\end{equation}
This proves Claim~(2) of Theorem~\ref{thm:main1}.
\end{proof}

One of the key properties needed with respect to the $\Gamma$-convergence is that if the function being approximated 
vanishes outside a bounded domain $\Om\subset X$ with $\mu(X\setminus\Om)>0$, then the sequence approximating
this function in the $\Gamma$-convergence condition~(ii) also vanishes outside the approximating domain. This property
is essential in the discussion on approximating solutions to the Dirichlet problem on $\Om$, and is called the 
\emph{matching boundary values} in~\cite[Section~4.2.1]{Braides}. 
However, the method of De Giorgi, as outlined in~\cite{Braides}, uses a decomposition of the domain $\Om$ into suitable
annular rings and constructing a partition of unity subordinate to this decomposition. 
An alternate method is to construct different $\Gamma$-limits for different boundary data as in~\cite[Chapter~21]{DalMaso},
but this again is computationally involved (see~\cite[Theorem~21.1]{DalMaso}). 
In the second main theorem, Theorem~\ref{thm:approx-zeroTrace}, 
we give a more direct proof of the existence of optimal sequences with matching boundary
values in the sense of~\cite{Braides}. We now prove this theorem.

\begin{proof}[proof of Theorem~\ref{thm:approx-zeroTrace}]
For $x\in X$ set $w(x) = \dist(x,X\setminus \Omega)$. For $j\in\N$ let 
    \[
        u_j = \min\{\max\{u-w,v_j\}, u+w\}. 
    \]
 Then, 
    \[
        u_j-u = \min\{\max\{-w,v_j-u\}, w\},
    \]
and therefore $u_j-u \in N^{1,2}_0(\Omega)$. 
Further, note that as $w\geq 0$, when $x\in X$ we have that $u_j(x)-u(x)>0$ if and only if $v_j(x)-u(x)>0$, and in 
which case we also have $u_j-u(x)\leq v_j-u(x)$. If $u_j(x)-u(x)\le 0$, then necessarily $v_j(x)-u(x)\le 0$ 
and $u_j-u(x)\geq v_j-u(x)$. Thus, $|u_j(x)-u(x)|\leq |v_j(x)-u(x)|$ for all $x\in X$. Therefore, as $v_j\to u$ in $L^2(X)$, we have 
$u_j\to u$  in $L^{2}(X)$.

To see the $N^{1,2}$-convergence, we use~\cite[Proposition~6.3.23]{HKSTbook}.  
Using this proposition and the fact that $g_w\le \chi_\Om$, we get  
    \[
        g_{u_j-u}\le  \chi_{\{|v_j-u|>w\}\cap\Om} + g_{v_j-u}\cdot  \chi_{\{|v_j-u|\leq w\}\cap\Om}. 
    \]
Note that for any $\epsilon>0$,
    \[
        \int_X g_{u_j-u}^2\, d\mu  = \int_\Omega g_{u_j-u}^2 d\mu 
        =  \int_{\Omega_\epsilon} g_{u_j-u}^2 d\mu + \int_{\Omega\setminus \Omega_\epsilon} g_{u_j-u}^2 d\mu,
    \]
where $\Omega_\epsilon = \{x\in\Omega: \dist(x,X\setminus \Omega)>\epsilon\}$. So we get 
 \[
 \int_X g_{u_j-u}^2\, d\mu \leq 
 \mu(\{|v_j-u|>\epsilon\}\cap\Om_\epsilon) + \int_{\{|v_j-u|\le w\}\cap\Om} g_{v_j-u}^2 d\mu + \mu(\Om\setminus\Om_\epsilon).
 \]
 Note that
 \[
 \mu(\{|v_j-u|>\epsilon\}\cap\Om_\epsilon)\le \frac{1}{\epsilon^2}\int_{\Om_\epsilon}|v_j-u|^2\, d\mu.
 \]
 Since $v_j\to u$ in $L^2(X)$, it follows that $\limsup_{j\to\infty} \mu(\{|v_j-u|>\epsilon\}\cap\Om_\epsilon)=0$,
 and moreover, as $v_j\to u$ in the Hilbert space $N^{1,2}(X)$, it also follows that 
 $\limsup_{j\to\infty}\int_\Om g_{v_j-u}^2\, d\mu=0$. Therefore we have
    \[
        \limsup_{j\to \infty} \int_X g_{u_j-u}^2\, d\mu \leq \mu(\Om\setminus\Om_\epsilon).
    \]
As $\epsilon>0$ can be chosen arbitrarily small, and  $\bigcap_{\epsilon>0}\Om\setminus\Om_\epsilon$ is empty, 
it follows that $\limsup_{\epsilon\to 0^+}\mu(\Om\setminus\Om_\epsilon)=0$, and so
 \[
        \lim_{j\to \infty} \int_X g_{u_j-u}^2\,  d\mu  = 0. 
  \] 
Thus we have that $u_j\to u$ in $N^{1,2}(X)$.

Finally, note that $|\EC_{r_j}^{1/2} [u_j]-\EC_{r_j}^{1/2}[v_j]|\le \EC_{r_j}^{1/2}[u_j-v_j]$. By
Theorem~\ref{thm:Dirich-Lowerbound} we therefore have that
\[
|\EC_{r_j}^{1/2} [u_j]-\EC_{r_j}^{1/2}[v_j]|\le C\, \int_X g_{u_j-v_j}^2\, d\mu=C\, \int_\Om g_{v_j-u_j}^2\, d\mu\to 0
\text{ as }j\to\infty.
\]
It follows that $\EC[u]=\lim_{j\to\infty}\EC_{r_j}[v_j]=\lim_{j\to\infty}\EC_{r_j}[u_j]$ as desired.
\end{proof}

\section{Properties of $\EC$}\label{Sec:4}

In this section we will consider some key basic properties of the limit form $\EC$. 
There is a property of $\EC$ that would be essential in order to call $\EC$ a Dirichlet form; namely, that
$\EC$ is Markovian. In this section we will also prove that $\EC$ is Markovian and that it is local. 
The first lemma follows 
from~\cite[Proposition~11.9(e) and Theorem~11.10]{DalMaso}.

\begin{defn}\label{def:domain-dirich}
Let $\EC$ be a symmetric bilinear form from a Hilbert space $H$ to $[-\infty,\infty]$. Let 
$\mathcal{D}(\EC)$ be a dense linear subspace of $H$ such that $\EC$ is a symmetric bilinear form  
mapping $\mathcal{D}(\EC)$ to $\mathbb{R}$. Then we say $\mathcal{D}(\EC)$ is the \emph{domain} of $\EC$.
\end{defn}

\begin{defn}\label{def:core}
Let $C_c(X)$ be the collection of all real-valued continuous functions with compact support on $X$. A \emph{core} of a 
symmetric biliear form $\EC$ is a subset $\mathcal{C}$ of $\mathcal{D}(\EC) \cap C_c(X)$ such that $\mathcal{C}$ is 
dense in $\mathcal{D}(\EC)$ under the norm $\sqrt{(\cdot,\cdot)_H+\EC(\cdot)}$ and dense in $C_c(X)$ under the uniform norm.
\end{defn}

\begin{defn}\label{def:closed}
We say that $\EC$ is \emph{closed} if for any $(u_n)_n\subseteq \mathcal{D}(\EC)$ satisfying 
$(u_n-u_m,u_n-u_m)_H+\EC[u_n-u_m] \to 0$ as $m,n\to \infty$, there exists $u\in \mathcal{D}(\EC)$ such that 
$(u_n-u,u_n-u)_H+\EC[u_n-u] \to 0$ as $n\to \infty$. 
\end{defn}

In our setting, we have chosen the Hilbert space $H=N^{1,2}(X)$, and consequent
to Proposition~\ref{prop:Comparability} (or Theorem~\ref{thm:main1}), we also have that
$N^{1,2}(X)=\mathcal{D}(\EC)$. This is because the 
comparability of $\EC$ and the energy form on $N^{1,2}(X)$ (Proposition~\ref{prop:Comparability}) guarantees 
that $\EC$ should always be finite on $N^{1,2}(X)$.

We can choose the core $\mathcal{C}$ to be the collection of all Lipschitz functions on $X$. 
Since the measure $\mu$ on $X$ is doubling and supports a $2$-Poincar\'e inequality, it follows that Lipschitz
functions are dense in $N^{1,2}(X)$, see for instance~\cite[Section~8.2]{HKSTbook}.

\begin{lem}
$\EC$ is a symmetric bilinear form on $N^{1,2}(X)$. 
\end{lem}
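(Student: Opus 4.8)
The plan is to recover the full bilinear form from its energy (diagonal) via polarization, so the real task is to show that the energy form $\EC[\cdot]$ is a nonnegative quadratic form on $N^{1,2}(X)$: nonnegative, $2$-homogeneous, and satisfying the parallelogram law $\EC[u+v]+\EC[u-v]=2\EC[u]+2\EC[v]$. Granting this, the Jordan--von Neumann polarization $\EC(u,v):=\tfrac14(\EC[u+v]-\EC[u-v])$ defines a symmetric bilinear form whose diagonal is $\EC[\cdot]$; symmetry is immediate from $2$-homogeneity (which gives $\EC[u-v]=\EC[v-u]$), and additivity in each slot is the standard consequence of the parallelogram law.

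Each $\EC_{r_k}$ is (the diagonal of) a nonnegative symmetric bilinear form defined by~\eqref{eq:ECr}, so each $\EC_{r_k}[\cdot]$ is already a quadratic form. The key point is that this structure is inherited by the $\Gamma$-limit, which is precisely the content of~\cite[Proposition~11.9(e) and Theorem~11.10]{DalMaso}. If a self-contained argument is preferred, the parallelogram law for $\EC$ can be verified directly from Definition~\ref{def:gamma-conv}: choosing recovery sequences $u_k\to u$ and $v_k\to v$ in $N^{1,2}(X)$ with $\EC_{r_k}[u_k]\to\EC[u]$ and $\EC_{r_k}[v_k]\to\EC[v]$, noting that $u_k\pm v_k\to u\pm v$, and combining the $\liminf$ inequality~(i) with the parallelogram law for each $\EC_{r_k}$ gives
\[
\EC[u+v]+\EC[u-v]\le\liminf_k\big(\EC_{r_k}[u_k+v_k]+\EC_{r_k}[u_k-v_k]\big)=2\EC[u]+2\EC[v];
\]
applying the same reasoning to recovery sequences for $u+v$ and $u-v$ (and averaging) yields the reverse inequality, while an analogous scaling argument using the $2$-homogeneity of each $\EC_{r_k}$ gives $2$-homogeneity of $\EC[\cdot]$.

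By Proposition~\ref{prop:Comparability} we have $\EC[u]\approx\int_X g_u^2\,d\mu$ for every $u\in N^{1,2}(X)$, so $\EC[\cdot]$ is finite on all of $N^{1,2}(X)$, whence $\mathcal{D}(\EC)=N^{1,2}(X)$, and $\EC[\cdot]^{1/2}$ is comparable to the energy seminorm, hence continuous with respect to $N^{1,2}(X)$-convergence. This continuity promotes the rational-scalar homogeneity of the polarized form (which comes from additivity) to real homogeneity, completing the verification that $\EC(\cdot,\cdot)$ is a genuine symmetric bilinear form on $N^{1,2}(X)$. The one genuinely delicate step is the persistence of the quadratic/parallelogram structure through the $\Gamma$-limit: algebraic identities are not preserved by $\Gamma$-limits in general, and it is the rigidity of nonnegative quadratic forms---encoded in~\cite[Theorem~11.10]{DalMaso}, or equivalently in the two-sided recovery-sequence argument above---that makes this work. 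Everything else reduces to routine polarization together with the comparability already established.
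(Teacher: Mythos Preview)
Your proposal is correct and takes essentially the same approach as the paper: the paper's proof consists of a single sentence invoking \cite[Proposition~11.9 and Theorem~11.10]{DalMaso}, which is exactly the reference you cite for the persistence of the quadratic structure under $\Gamma$-limits. Your additional self-contained sketch of the parallelogram-law argument via recovery sequences is a welcome elaboration of what those results encode, and your use of Proposition~\ref{prop:Comparability} to conclude $\mathcal{D}(\EC)=N^{1,2}(X)$ matches the paper's reasoning in the paragraph preceding the lemma.
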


The above lemma follows from~\cite[Proposition~11.9 and Theorem~11.10]{DalMaso}, where symmetric bilinear forms are called quadratic forms.

\begin{lem}\label{lem:loc_NpApprox}
The form $\EC$ is a local form on $N^{1,2}(X)$. Moreover, for each $u\in N^{1,2}(X)$ we have that
\[
\EC[u]=\lim_{k\to\infty}\EC_{r_k}[u].
\]
\end{lem}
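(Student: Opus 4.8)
The plan is to prove the convergence identity $\EC[u]=\lim_k\EC_{r_k}[u]$ first, and then to deduce locality from it by polarization together with a geometric vanishing argument for the approximating forms.

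For the convergence, I would apply the two halves of the $\Gamma$-convergence condition (Definition~\ref{def:gamma-conv}) to the fixed function $u$. Applying condition~(i) to the constant sequence $u_k\equiv u$ gives at once $\EC[u]\le\liminf_k\EC_{r_k}[u]$. For the reverse inequality, let $(v_k)_k$ be a recovery sequence for $u$, so that $v_k\to u$ in $N^{1,2}(X)$ and $\EC[u]=\lim_k\EC_{r_k}[v_k]$. Since each $\EC_{r_k}$ is a positive-definite symmetric bilinear form, $\EC_{r_k}^{1/2}$ obeys the triangle inequality, whence $\EC_{r_k}[u]^{1/2}\le \EC_{r_k}[v_k]^{1/2}+\EC_{r_k}[u-v_k]^{1/2}$. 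The uniform-in-$r$ upper bound from Theorem~\ref{thm:Dirich-Lowerbound}, namely $\EC_{r}[w]\le C\int_X g_w^2\,d\mu$ for all $r>0$, applied to $w=u-v_k$ yields $\EC_{r_k}[u-v_k]\le C\int_X g_{u-v_k}^2\,d\mu\to0$, because $v_k\to u$ in $N^{1,2}(X)$. Hence $\limsup_k\EC_{r_k}[u]^{1/2}\le\EC[u]^{1/2}$, and combined with the liminf bound this gives $\lim_k\EC_{r_k}[u]=\EC[u]$. This is precisely the argument already carried out in the proof of Proposition~\ref{prop:Comparability}, now read with the roles of the two sequences reversed.

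With the convergence identity in hand, polarization gives $\EC(u,v)=\lim_k\EC_{r_k}(u,v)$ for all $u,v\in N^{1,2}(X)$, since both $\EC$ and each $\EC_{r_k}$ are symmetric bilinear and $\EC(u,v)=\tfrac14(\EC[u+v]-\EC[u-v])$. To establish locality I would take $u,v\in N^{1,2}(X)$ with $\dist(\operatorname{supp}u,\operatorname{supp}v)=\delta>0$ and show $\EC_{r_k}(u,v)=0$ once $r_k$ is small. Inspecting the definition~\eqref{eq:ECr}, a summand indexed by an edge $\xB\sim\yB$ can be nonzero only if both $u_{r_k}(\yB)-u_{r_k}(\xB)\ne0$ and $v_{r_k}(\yB)-v_{r_k}(\xB)\ne0$. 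Since $u_{r_k}(\xB)=\fint_{B(\xB,r_k/4)}u\,d\mu$, the first condition forces one of the balls $B(\xB,r_k/4),B(\yB,r_k/4)$ to meet $\operatorname{supp}u$, and the second forces one of them to meet $\operatorname{supp}v$. Because neighbors satisfy $d(\xB,\yB)<4r_k$, any points of $\operatorname{supp}u$ and $\operatorname{supp}v$ selected this way lie within distance at most $r_k/4+4r_k+r_k/4<5r_k$ of one another, so for $r_k<\delta/5$ this contradicts $\dist(\operatorname{supp}u,\operatorname{supp}v)=\delta$. Hence every summand vanishes, $\EC_{r_k}(u,v)=0$, and passing to the limit gives $\EC(u,v)=0$, which is the locality of $\EC$.

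The technical engine is the convergence identity, and its only delicate point is the availability of the uniform-in-$r$ bound $\EC_r[w]\le C\int_X g_w^2\,d\mu$ to control the cross term $\EC_{r_k}[u-v_k]$; this is exactly what Theorem~\ref{thm:Dirich-Lowerbound} supplies. The locality step is then bookkeeping, the main thing to watch being that the averaging radius $r_k/4$ and the neighbor bound $d(\xB,\yB)<4r_k$ combine to keep the relevant supports within distance $<5r_k$, so that the threshold $r_k<\delta/5$ suffices. The same reasoning proves the stronger statement that $\EC(u,v)=0$ whenever $v$ is constant on a neighborhood of $\operatorname{supp}u$, since then for small $r_k$ both averaging balls attached to any edge meeting $\operatorname{supp}u$ lie in that neighborhood, forcing $v_{r_k}(\xB)=v_{r_k}(\yB)$.
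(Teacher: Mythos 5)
Your proof is correct, and for the convergence identity it follows essentially the same path as the paper: the $\Gamma$-liminf inequality applied to the constant sequence gives $\EC[u]\le\liminf_k\EC_{r_k}[u]$, and the recovery sequence together with the triangle inequality for $\EC_{r_k}^{1/2}$ and the uniform bound $\EC_{r}[w]\le C\int_Xg_w^2\,d\mu$ from Theorem~\ref{thm:Dirich-Lowerbound} gives the matching limsup bound. Where you genuinely diverge is in the locality step, and in the order of the two claims. The paper proves locality \emph{first}, before knowing $\EC[w]=\lim_k\EC_{r_k}[w]$, so it must work with a recovery sequence $\widehat{f_k}$ for $u+v$: it invokes Theorem~\ref{thm:approx-zeroTrace} to force $\widehat{f_k}=0$ off $K_u\cup K_v$, splits $\widehat{f_k}=f_k+h_k$ via Lipschitz cutoffs, checks $f_k\to u$ and $h_k\to v$ in $N^{1,2}(X)$, and then uses the vanishing of the discrete cross terms to get $\EC_{r_k}[f_k-h_k]=\EC_{r_k}[f_k+h_k]$, concluding $\EC[u-v]\le\EC[u+v]$ and, by replacing $v$ with $-v$, equality. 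You instead prove the convergence identity first and then apply the same geometric vanishing of cross terms directly to $u$ and $v$ themselves via polarization, $\EC(u,v)=\lim_k\EC_{r_k}(u,v)=0$ once $r_k<\delta/5$. Your route is shorter and avoids both Theorem~\ref{thm:approx-zeroTrace} and the cutoff construction; its only prerequisites are the already-established bilinearity and symmetry of $\EC$ (needed for the polarization identity) and the convergence identity, both of which you have in hand at that point. The distance bookkeeping ($r_k/4+4r_k+r_k/4<5r_k$ using $d(\xB,\yB)<4r_k$ for neighbors) is accurate, and your closing remark that the argument yields the stronger locality (with $v$ merely constant near $\operatorname{supp}u$) is a genuine, if unneeded, bonus.
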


\begin{proof}
We first prove the locality property of $\EC$.
Let $u,v\in N^{1,2}(X)$
such that the support $K_u$ of $u$ and the support $K_v$ of $v$ are disjoint. Then, as $X$ is compact, there is 
some $\rho>0$ such that $\text{dist}(K_u,K_v)\ge 10\rho$.
By the bilinearity and symmetry properties of $\EC$, we have $\EC[u+v]=\EC[u-v]+4\EC(u,v)$. 
To show that $\EC(u,v)=0$ (the definition of locality for a symmetric bilinear form), it suffices to show 
that $\EC[u+v]=\EC[u-v]$.
Note that $u+v,u-v$ are both zero in $X\setminus (K_u\cup K_v)$. Let $(\widehat{f_k})_k\subset N^{1,2}(X)$ be an
approximating sequence for $u+v$ as guaranteed by the $\Gamma$-convergence. Then, thanks to
Theorem~\ref{thm:approx-zeroTrace}, we can also assume that each $\widehat{f_k}=0$ on $X\setminus (K_u\cup K_v)$.
Then for each positive integer $k$, the functions $f_k=\widehat{f_k}\, \chi_{K_u}$ and $h_k=\widehat{f_k}\, \chi_{K_v}$
both belong to $N^{1,2}(X)$ and $\widehat{f_k}=f_k+h_k$.
Indeed, as there is a positive distance between $K_u$ and $K_v$, we can find two compactly supported
Lipschitz functions $\pip_u$ and $\pip_v$ so that $\pip_u=1$ on $K_u$, $\pip_v=1$ on $K_v$, and 
$f_k=\pip_u\widehat{f_k}$, $h_k=\pip_v\widehat{f_k}$. Thus $f_k, h_k\in N^{1,2}(X)$, as seen
from~\cite[Proof of Proposition~7.1.35]{HKSTbook}.  Moreover, $f_k\to u$ in $N^{1,2}(X)$ and
$h_k\to v$ in $N^{1,2}(X)$. It follows that $f_k-h_k\to u-v$ in $N^{1,2}(X)$. 
It follows that
\[
\EC[u-v]\le \liminf_{k\to\infty}\EC_{r_k}[f_k-h_k].
\]
On the other hand, when $r_k<\rho$, we have that 
\[
\EC_{r_k}[f_k-h_k]=\EC_{r_k}[f_k]+\EC_{r_k}[h_k]=\EC_{r_k}[f_k+h_k],
\]
and so as $\lim_{k\to\infty}\EC_{r_k}[\widehat{f_k}]=\EC[u+v]$, 
we have that 
\[
\EC[u-v]\le \EC[u+v].
\]
Replacing $v$ with $-v$ in the above argument also gives that $\EC[u+v]\le \EC[u-v]$, and so we have that
$\EC[u+v]=\EC[u-v]$. It follows that $\EC$ is a local form.

Next, for $u\in N^{1,2}(X)$, let $(u_k)_k$ be a sequence in $N^{1,2}(X)$ such that $u_k\to u$ in $N^{1,2}(X)$
and $\lim_{k\to\infty}\EC_{r_k}[u_k]=\EC[u]$; existence of such sequence is guaranteed by the definition
of $\Gamma$-convergence. Then
\[
\EC[u]^{1/2}\le \liminf_{k\to\infty}\EC_{r_k}[u]^{1/2}
\le \liminf_{k\to\infty}\left(\EC_{r_k}[u-u_k]^{1/2}+\EC_{r_k}[u_k]^{1/2}\right).
\]
By Theorem~\ref{thm:Dirich-Lowerbound} and by the fact that $u_k\to u$ in $N^{1,2}(X)$ we have
$\lim_{k\to\infty} \EC_{r_k}[u-u_k]=0$, see for example~\eqref{eq:not-bright}. 
It follows that
$\EC[u]\le \liminf_{k\to\infty}\EC_{r_k}[u_k]=\EC[u]$, from which 
the desired conclusion now follows.
\end{proof}

In \cite[page~372]{Mosco}, it is shown 
that the $\Gamma$-limit of bilinear forms on $L^2(X)$ is closed. In our setting, we consider a 
$\Gamma$-convergence with the ambient Hilbert space itself being $N^{1,2}(X)$. Therefore
for completeness of the exposition, we include our proof here.

\begin{lem}\label{lem:closed}
The symmetric bilinear form $\EC$ is closed.
\end{lem}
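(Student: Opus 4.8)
The plan is to deduce the closedness of $\EC$ directly from the completeness of the Newton--Sobolev space together with the two-sided comparability proved in Proposition~\ref{prop:Comparability}. Recall that here the ambient Hilbert space is $H = N^{1,2}(X)$ and that $\mathcal{D}(\EC) = N^{1,2}(X)$, so the Cauchy quantity appearing in Definition~\ref{def:closed}, namely $(u_n - u_m, u_n - u_m)_H + \EC[u_n - u_m]$, already contains the full $N^{1,2}(X)$-norm of $u_n - u_m$ in its first summand. The essential point is that, by \eqref{eq:Comparability}, for every $w \in N^{1,2}(X)$ the energy $\EC[w]$ is squeezed between $\tfrac1C \int_X g_w^2\, d\mu$ and $C\int_X g_w^2\, d\mu$, so the energy term carries no information beyond the gradient part of the $N^{1,2}$-norm.

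First I would fix a sequence $(u_n)_n \subseteq \mathcal{D}(\EC) = N^{1,2}(X)$ satisfying $(u_n - u_m, u_n - u_m)_H + \EC[u_n - u_m] \to 0$ as $m, n \to \infty$. Since $\EC$ is positive-definite we have $\EC[u_n - u_m] \ge 0$, and $(u_n - u_m, u_n - u_m)_H \ge 0$ as well; because their sum tends to $0$, each nonnegative summand tends to $0$ separately. In particular $(u_n - u_m, u_n - u_m)_H \to 0$, that is, $(u_n)_n$ is a Cauchy sequence in $H = N^{1,2}(X)$.

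Next I would invoke the completeness of $N^{1,2}(X)$ (it is the Hilbert space $H$) to produce a limit $u \in N^{1,2}(X) = \mathcal{D}(\EC)$ with $(u_n - u, u_n - u)_H \to 0$ as $n \to \infty$; equivalently $\Vert u_n - u\Vert_{N^{1,2}(X)} \to 0$, which in particular forces $\int_X g_{u_n - u}^2\, d\mu \to 0$. It then remains only to control the energy term: applying the left-hand inequality of \eqref{eq:Comparability} to $w = u_n - u$ gives $\EC[u_n - u] \le C \int_X g_{u_n - u}^2\, d\mu \to 0$. Adding this to $(u_n - u, u_n - u)_H \to 0$ yields $(u_n - u, u_n - u)_H + \EC[u_n - u] \to 0$, which is precisely the conclusion required by Definition~\ref{def:closed}.

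I do not expect a genuine obstacle in this argument; its entire content is the observation that comparability transfers the already-known completeness of $N^{1,2}(X)$ to closedness of $\EC$. The only step demanding a little care is the bookkeeping in the second paragraph, namely confirming that the $H$-norm and the energy are each nonnegative and mutually dominated, so that the combined Cauchy condition genuinely reduces to Cauchy convergence in $N^{1,2}(X)$ rather than in some weaker topology. Once that is in place, completeness of $N^{1,2}(X)$ and the single inequality $\EC[w] \le C\int_X g_w^2\, d\mu$ close the argument.
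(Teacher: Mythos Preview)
Your argument is correct and shares the same skeleton as the paper's proof: both obtain the limit $u$ from completeness of $N^{1,2}(X)$ after observing that the Cauchy hypothesis forces $(u_n)_n$ to be Cauchy in $N^{1,2}(X)$. The difference lies in the final step, where you must show $\EC[u_n-u]\to 0$. You invoke the comparability inequality $\EC[w]\le C\int_X g_w^2\,d\mu$ from Proposition~\ref{prop:Comparability} and use $\int_X g_{u_n-u}^2\,d\mu\to 0$; the paper instead uses lower semicontinuity of the $\Gamma$-limit (from~\cite[Proposition~6.8]{DalMaso}) to write $\EC[f_n-f]\le\liminf_m\EC[f_n-f_m]$ and then takes $\limsup_n$, exploiting directly the assumed $\EC$-Cauchy condition. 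Your route is slightly more elementary in this specific setting, since it avoids citing a structural property of $\Gamma$-limits; the paper's route is more portable, as it would still work for a $\Gamma$-limit form not known to satisfy a two-sided comparability with the upper-gradient energy.
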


\begin{proof}
We first show that if $(f_n)_n$ is a sequence in $N^{1,2}(X)$ and $f\in L^2(X)$ such that $f_n\to f$ in $L^2(X)$ and
$\EC[f_n-f_m]\to 0$ as $m,n\to\infty$, then by modifying $f$ on a set of $\mu$-measure zero on $X$ if necessary, we have that
$f\in N^{1,2}(X)$ and $\Vert f_n-f_m\Vert_{N^{1,2}(X)}\to 0$ as $n,m\to\infty$. Indeed, as $\EC[f_n-f_m]\to 0$ as $m,n\to\infty$,
it follows from Theorem~\ref{thm:Dirich-Lowerbound} (or more specifically, from its limit version given in (2)~of Theorem~\ref{thm:main1}),
we have that $\int_Xg_{f_n-f_m}^2\, d\mu\to0$ as $n,m\to\infty$. It follows that $(f_n)_n$ is Cauchy in the Banach space $N^{1,2}(X)$,
and so such modification of $f$ exists.

Now let $(f_n)_n\subset \mathcal{D}(\EC)$ be a sequence such that $\Vert f_n-f_m\Vert_{N^{1,2}(X)}+\EC[f_n-f_m] \to 0$ as $m,n\to \infty$. 
Then necessarily $(f_n)_n$ is a Cauchy sequence in $N^{1,2}(X)$, and as $N^{1,2}(X)$ is a Banach space (see~\cite{Sh-Rev}),
there exists $f\in N^{1,2}(X)=\mathcal{D}(\EC)$ such that $\Vert f_n-f\Vert_{N^{1,2}(X)}\to 0$ as $n\to \infty$. By
Lemma~\ref{lem:loc_NpApprox}, we know that $f\in\mathcal{D}(\EC)$.
To complete the proof of closedness of $\EC$ we now show that $\EC[f_n-f] \to 0$ as $n\to \infty$. 

By~\cite[Proposition~6.8]{DalMaso}, we know that the $\Gamma$-limit is lower semicontinuous. 
Therefore, for each fixed $n$, using the lower semicontinuity of $\EC$, we have
\begin{align*}
\EC[f_n-f] \le \liminf_{m\to \infty} \EC[f_n-f_m].
\end{align*}
Hence, we have
\begin{align} \label{eqn.LimsupIs0}
0 \le \limsup_{n\to\infty} \EC[f_n-f] \le \limsup_{n\to\infty}\liminf_{m\to \infty} \EC[f_n-f_m]
= \lim_{m,n\to \infty}\EC[f_n-f_m] =0.
\end{align}
\end{proof}

\begin{prop}\label{prop:Markov}
The bilinear form $\EC$ is Markovian.
\end{prop}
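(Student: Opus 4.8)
The plan is to leverage the identity $\EC[v]=\lim_k\EC_{r_k}[v]$ from Lemma~\ref{lem:loc_NpApprox}, applied to both $v=u$ and $v=u_T$ (where $u_T=\max\{0,\min\{u,1\}\}$), so that the Markov inequality $\EC[u_T]\le\EC[u]$ reduces to the scale-by-scale comparison $\limsup_k\EC_{r_k}[u_T]\le\lim_k\EC_{r_k}[u]$. The lattice operation $u\mapsto u_T$ is continuous on $N^{1,2}(X)$, and $u\mapsto\EC[u]^{1/2}$ is continuous there as well (from $|\EC[u]^{1/2}-\EC[v]^{1/2}|\le\EC[u-v]^{1/2}$ together with the bound $\EC[u-v]\le C\int_X g_{u-v}^2\,d\mu$ of Proposition~\ref{prop:Comparability}). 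Hence I would first reduce to the case where $u$ is Lipschitz, say with Lipschitz constant $L$, using that the Lipschitz functions form a core for $\EC$.

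The central observation is that $\EC_r[w]$ is precisely the graph energy~\eqref{eq:Graph-energy}, call it $\mathcal{G}_r[\cdot]$, of the averaged function $w_r(\xB)=\jint_{B(\xB,r/4)}w\,d\mu$. Writing $T(t)=\max\{0,\min\{t,1\}\}$, I would compare on the graph $X_r$ the two functions $(u_T)_r$ and $T\circ u_r$. Since $T$ is $1$-Lipschitz, $|T(u_r(\yB))-T(u_r(\xB))|\le|u_r(\yB)-u_r(\xB)|$ edge by edge, so $\mathcal{G}_r[T\circ u_r]\le\mathcal{G}_r[u_r]=\EC_r[u]$. Thus, setting $\phi_r:=(u_T)_r-T\circ u_r$, the triangle inequality in $\mathcal{G}_r^{1/2}$ yields $\EC_r[u_T]^{1/2}=\mathcal{G}_r[(u_T)_r]^{1/2}\le\EC_r[u]^{1/2}+\mathcal{G}_r[\phi_r]^{1/2}$, and everything reduces to showing $\mathcal{G}_{r_k}[\phi_{r_k}]\to0$. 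Because $T$ is $1$-Lipschitz and $T(u_r(\xB))=T(u_{B(\xB,r/4)})$, the Jensen gap admits the pointwise bound $|\phi_r(\xB)|\le\jint_{B(\xB,r/4)}|u-u_{B(\xB,r/4)}|\,d\mu$, which by the $2$-Poincar\'e inequality is $\lesssim r\,(\jint_{B(\xB,r/4)}g_u^2\,d\mu)^{1/2}$; moreover $\phi_r(\xB)=0$ unless $u$ straddles the level $0$ or the level $1$ on $B(\xB,r/4)$.

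Using bounded valence and the doubling property to pass from $\mathcal{G}_r[\phi_r]$ to $r^{-2}\sum_{\xB}|\phi_r(\xB)|^2\mu_r(\xB)$, the two factors of $r$ cancel against the Poincar\'e estimate, and since $\mu_r(\xB)=\mu(B(\xB,2r))\approx\mu(B(\xB,r/4))$ I would arrive at $\mathcal{G}_r[\phi_r]\lesssim\int_{A_r}g_u^2\,d\mu$, where for Lipschitz $u$ the straddling set is contained in $A_r=\{x:\dist(u(x),\{0,1\})\le Lr\}$, every relevant ball $B(\xB,r/4)$ lying inside $A_r$ by the Lipschitz bound. As $r\to0$ the sets $A_r$ decrease to the level sets $\{u=0\}\cup\{u=1\}$, so $\int_{A_r}g_u^2\,d\mu\to\int_{\{u=0\}\cup\{u=1\}}g_u^2\,d\mu$.

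The main obstacle is this final limit, which is salvaged exactly because the minimal upper gradient $g_u$ vanishes $\mu$-almost everywhere on level sets of $u$, whence $\int_{\{u=0\}\cup\{u=1\}}g_u^2\,d\mu=0$ and $\mathcal{G}_{r_k}[\phi_{r_k}]\to0$. Chaining the three estimates gives $\EC[u_T]^{1/2}\le\EC[u]^{1/2}$ for Lipschitz $u$, and the reduction above promotes this to every $u\in N^{1,2}(X)$. The delicate point, and the reason the crude estimate $|\phi_r|\lesssim r$ (which only yields an $O(1)$ error) is insufficient, is that one must keep the factor $(\jint g_u^2)^{1/2}$ coming from the Poincar\'e inequality rather than a $\Lip u$ bound; only this refinement forces the error to concentrate on the level sets $\{u=0\}\cup\{u=1\}$, where $g_u$ is null, and thereby produces the sharp Markov inequality with constant one rather than merely an inequality up to the comparison constant $C$.
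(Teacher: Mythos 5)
Your proof is correct, and it takes a genuinely different route from the paper's. The paper also reduces to Lipschitz $u$, but then works edge by edge: it splits the neighbouring pairs $\xB\sim\yB$ according to whether $B(\xB,r/4)\cup B(\yB,r/4)$ stays inside one of the regions $\{u<0\}$, $\{0\le u\le 1\}$, $\{u>1\}$ or straddles an interface, and on straddling edges it bounds the discrepancy $|v_r(\xB)-u_r(\xB)|$ crudely by $5Lr$ using only the Lipschitz constant. That crude bound forces the error term to be $\lesssim L^2\mu\bigl((\partial E_-)_r\cup(\partial E_+)_r\bigr)$, whose limit is $L^2\bigl(\mu(\partial\{u<0\})+\mu(\partial\{u>1\})\bigr)$ and need not vanish; the paper therefore has to truncate at generic levels $t_-<0<1<t_+$ with $\mu(\{u=t_\pm\})=0$ and pass to the limit $t_-\to 0$, $t_+\to 1$ at the end. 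Your argument replaces this with the commutator $\phi_r=(u_T)_r-T\circ u_r$, bounds the Jensen gap by the Poincar\'e inequality so that the error concentrates as $\int_{A_r}g_u^2\,d\mu$ on a shrinking neighbourhood of the level sets $\{u=0\}\cup\{u=1\}$, and then invokes locality of the minimal weak upper gradient ($g_u=0$ $\mu$-a.e.\ on level sets of $u$). This buys the sharp constant directly and eliminates the generic-level approximation entirely, at the cost of importing one extra (standard) fact about minimal upper gradients. Two small points to make explicit if you write this up: (i) the reduction to Lipschitz $u$ requires continuity of $u\mapsto u_T$ in the $N^{1,2}$ norm, so that lower semicontinuity of $\EC$ applies along $(u_n)_T\to u_T$ --- the paper's own closing limit argument relies on the same kind of statement, so you are no worse off, but it should be cited; (ii) in passing from $\mathcal{G}_r[\phi_r]$ to $r^{-2}\sum_{\xB}|\phi_r(\xB)|^2\mu_r(\xB)$ you use, besides bounded valence, that $\mu_r(\xB)\approx\mu_r(\yB)$ for $\xB\sim\yB$, which follows from doubling since $d(\xB,\yB)<4r$.
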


\begin{proof}
To show that the form is Markovian, it suffices to show that if $u\in N^{1,2}(X)$, setting $v=\max\{0,\min\{1,u\}\}$,
we have $\EC[v]\le \EC[u]$. Since $(X,d,\mu)$ is doubling and supports a $2$-Poincar\'e inequality, we know that
Lipschitz functions form a dense subclass of $N^{1,2}(X)$; hence it suffices to prove this property for the case that
$u$ is Lipschitz continuous.

So let $u\in N^{1,2}(X)$ be Lipschitz on $X$, and set $E_-=\{u<0\}$, $E_+=\{u>1\}$, and $E_0=\{0\le u\le 1$.
Then as $X$ is compact, and as $\overline{E_-}$ and $\overline{E_+}$ are disjoint, we have that 
$\rho:=\text{dist}(\overline{E_-},\ \overline{E_+})>0$. So if $0<r<\rho/16$, then we have that for $\xB\in X_r$
and $\yB\in X_r$ with $\yB\sim\xB$,
the set $B(\xB,r/4)\cup B(\yB,r/4)$ cannot intersect both $\overline{E_+}$ and $\overline{E_-}$.

Now let $\xB,\yB\in X_r$ with $\xB\sim\yB$. If $B(\xB,r/4)\cup B(\yB,r/4)\subset\overline{E_-}$
or $B(\xB,r/4)\cup B(\yB,r/4)\subset \overline{E_+}$ or $B(\xB,r/4)\cup B(\yB,r/4)\subset E_0$, then
we have $|v_r(\xB)-v_r(\yB)|\le |u_r(\xB)-u_r(\yB)|$. If either 
$B(\xB,r/4)\cup B(\yB,r/4)$ intersects both $E_-$ and $E_0$ (the case that
$B(\xB,r/4)\cup B(\yB,r/4)$ intersects both $E_+$ and $E_0$ will be handled in s very similar manner).
In this case, we have
\[
|v_r(\xB)-v_r(\yB)|\le |v_r(\xB)-u_r(\xB)|+|u_r(\xB)-u_r(\yB)|+|u_r(\yB)-v_r(\yB)|.
\]
For such $\xB$, we have to estimate $|v_r(\xB)-u_r(\xB)|$ and $|v_r(\yB)-u_r(\yB)|$. Note that
\begin{align*}
|v_r(\xB)-u_r(\xB)|^2&=\bigg\vert \jint_{B(\xB,r/4)}(u-v)\, d\mu\bigg\vert^2\le \frac{1}{\mu(B(\xB,r/4))}\int_{B(\xB,r/4)\cap E_-}|u|^2\, d\mu\\
|v_r(\yB)-u_r(\yB)|^2&=\bigg\vert \jint_{B(\yB,r/4)}(u-v)\, d\mu\bigg\vert^2\le \frac{1}{\mu(B(\yB,r/4))}\int_{B(\yB,r/4)\cap E_-}|u|^2\, d\mu.
\end{align*}
Setting $L$ to be a Lipschitz constant of $u$ and that $B(\xB,r/4)\cup B(\yB,r/4)$ intersects
$E_-$ and $E_0$, and $d(\xB,\yB)\le 4r$, necessarily $|u|\le 5Lr$ on $B(\xB,r/4)\cup B(\yB,r/4)$.
Hence
\[
|v_r(\xB)-u_r(\xB)|^2\le 25L^2r^2 \ \text{ and }\ |v_r(\yB)-u_r(\yB)|^2\le 25L^2r^2.
\]
Similar estimates hold if $B(\xB,r/4)\cup B(\yB,r/4)$ intersects both $E_+$ and $E_0$, now with $|u|^2$ replaced by
$|u-1|^2$. Note also that when $B(\xB,r/4)\cup B(\yB,r/4)$ intersects both $E_-$ and $E_0$ and
$\xB\sim\yB$, necessarily we must have
$B(\xB,r/4)\cup B(\yB,r/4)\subset\bigcup_{z\in \partial E_-}B(z, 6r)$ because $X$ is a geodesic space and $u$ is 
continuous. We set
\[
(\partial E_-)_r=\bigcup_{z\in \partial E_-}B(z, 6r),\qquad 
\text{ and }\qquad 
(\partial E_+)_r=\bigcup_{z\in \partial E_+}B(z, 6r), 
\]
and let
\[
X_r[v]:=\{\xB\in X_r\, :\, B(\xB,r/4)\subset(\partial E_-)_r\cup(\partial E_+)_r\}.
\]
Note that
\begin{align*}
\EC_r[v]&=\sum_{\xB\in X_r}\sum_{\yB\sim\xB}\frac{|v_r(\xB)-v_r(\yB)|^2}{r^2}\, \mu_r(\xB)\\
&
\le\sum_{\xB\in X_r[v]}\sum_{\yB\sim\xB}
\frac{\left(|v_r(\xB)-u_r(\xB)|+|u_r(\xB)-u_r(\yB)|+|u_r(\yB)-v_r(\yB)|\right)^2}{r^2}\, \mu_r(\xB)\\
&\qquad\qquad\qquad
+\sum_{\xB\in X_r\setminus X_r[v]}\sum_{\yB\sim\xB}\frac{|u_r(\xB)-u_r(\yB)|^2}{r^2}\, \mu_r(\xB).
\end{align*}
It follows that 
\begin{align*}
\EC_r[v]&\le
\sum_{\xB\in X_r}\sum_{\yB\sim\xB}\frac{|u_r(\xB)-u_r(\yB)|^2}{r^2}\, \mu_r(\xB)\\
&\qquad\qquad
+\sum_{\xB\in X_r[v]}\sum_{\yB\sim\xB}\frac{|v_r(\xB)-u_r(\xB)|^2}{r^2}\, \mu_r(\xB)+\sum_{\xB\in X_r[v]}\sum_{\yB\sim\xB}\frac{|v_r(\yB)-u_r(\yB)|^2}{r^2}\, \mu_r(\xB)\\
&\qquad\qquad\qquad
+2\sum_{\xB\in X_r[v]}\sum_{\yB\sim\xB}\frac{|v_r(\xB)-u_r(\xB)|\cdot |u_r(\xB)-u_r(\yB)|}{r^2}\, \mu_r(\xB)\\
&\qquad\qquad\qquad
+2\sum_{\xB\in X_r}\sum_{\yB\sim\xB}\frac{|v_r(\yB)-u_r(\yB)|\cdot |u_r(\xB)-u_r(\yB)|}{r^2}\, \mu_r(\xB)\\
&\qquad\qquad\qquad
+2\sum_{\xB\in X_r[v]}\sum_{\yB\sim\xB}\frac{|v_r(\xB)-u_r(\xB)|\cdot |v_r(\yB)-u_r(\yB)|}{r^2}\, \mu_r(\xB).
\end{align*}
Using H\"older's inequality, we have
\begin{align*}
\EC_r[v] & \le \EC_r[u]+\sum_{\xB\in X_r[v]}\sum_{\yB\sim\xB}\frac{|v_r(\xB)-u_r(\xB)|^2}{r^2}\, \mu_r(\xB)
+\sum_{\xB\in X_r[v]}\sum_{\yB\sim\xB}\frac{|v_r(\yB)-u_r(\yB)|^2}{r^2}\, \mu_r(\xB)
\\
&\qquad\qquad
+2\EC_r[u]^{1/2} \cdot \left(\sum_{\xB\in X_r[v]}\sum_{\yB\sim\xB}\frac{|v_r(\xB)-u_r(\xB)|^2}{r^2}\, \mu_r(\xB)\right)^{1/2}
\\
&\qquad\qquad
+2\EC_r[u]^{1/2} \cdot \left(\sum_{\xB\in X_r[v]}\sum_{\yB\sim\xB}\frac{|v_r(\yB)-u_r(\yB)|^2}{r^2}\, \mu_r(\xB)\right)^{1/2}
\\
&\qquad\qquad
+2 \left[\left(\sum_{\xB\in X_r[v]}\sum_{\yB\sim\xB}\frac{|v_r(\xB)-u_r(\xB)|^2}{r^2}\, \mu_r(\xB)\right)\left(\sum_{\xB\in X_r[v]}\sum_{\yB\sim\xB}\frac{|v_r(\yB)-u_r(\yB)|^2}{r^2}\, \mu_r(\xB)\right)\right]^{1/2}.
\end{align*}
It now suffices to show that both the term $\sum_{\xB\in X_r[v]}\sum_{\yB\sim\xB}\frac{|v_r(\yB)-u_r(\yB)|^2}{r^2}\, \mu_r(\xB)$ 
and the term $ \sum_{\xB\in X_r[v]}\sum_{\yB\sim\xB}\frac{|v_r(\yB)-u_r(\yB)|^2}{r^2}\, \mu_r(\xB)$ tend to $0$ as $r\to 0$.
From the discussion prior to the above estimates, we see that
\begin{align*}
 \sum_{\xB\in X_r[v]}\sum_{\yB\sim\xB}\frac{|v_r(\xB)-u_r(\xB)|^2}{r^2}\, \mu_r(\xB)
&\le N\,  \sum_{\xB\in X_r[v]}\frac{|v_r(\xB)-u_r(\xB)|^2}{r^2}\, \mu_r(\xB)\\
&\le N\, 25L^2\, \sum_{\xB\in X_r[v]}\mu_r(\xB)\\
&\le C\, 25L^2\, \mu((\partial E_-)_r\cup(\partial E_+)_r),
\end{align*}
where we have used the fact that for each $\xB\in X_r$ there are at most $N$ number of vertices $\yB\in X_r$ such that
$\xB\sim\yB$, with $N$ depending only on the doubling constant associated with $\mu$; and we also used
fact that there is at most 
bounded overlap property of the balls $B(\yB,r)$, $\yB\in X_r$, to obtain the last inequality.
A similar argument also gives
\[
 \sum_{\xB\in X_r[v]}\sum_{\yB\sim\xB}\frac{|v_r(\yB)-u_r(\yB)|^2}{r^2}\, \mu_r(\xB)
\le C\, 25L^2\, \mu((\partial E_-)_r\cup(\partial E_+)_r).
\]
It follows that
\[
\EC_r[v]\le \EC_r[u]+10C\, L\, \EC_r[u]^{1/2}\mu((\partial E_-)_r\cup(\partial E_+)_r)^{1/2}
+25\, C\, L^2\, \mu((\partial E_-)_r\cup(\partial E_+)_r).
\]
 So if 
$\mu(\partial E_-)+\mu(\partial E_+)=0$, then we have that
\[
\EC[v]\le \liminf_{k\to\infty}\EC_{r_k}[v]\le \liminf_{k\to\infty}\EC_{r_k}[u]=\EC[u].
\]
The last equality follows from Lemma~\ref{lem:loc_NpApprox} above.
The assumption of $\mu(\partial E_-)+\mu(\partial E_+)=0$ is not a serious obstacle, as we know that for 
almost every $t\in \R$ we have that $\mu(\{u=t\})=0$, and so we can approximate $0$ from below by such $t_-<0$,
and approximate $1$ from above by such $t_+>1$, and consider $v_{t_-,t_+}=\max\{t_-,\min\{t_+,u\}\}$
and obtain the Markov property for these, and then let $t_-\to 0$, $t_+\to 1$ and note that 
$v_{t_-,t_+}\to v$ in $N^{1,2}(X)$; we know that $\EC$ is bounded
(from Theorem~\ref{thm:Dirich-Lowerbound} above)  bilinear form on $N^{1,2}(X)$.
The fact that $\EC_r[v_{t_-,t_+}]$ converges to $\EC_r[v]$ can be obtained from 
the closability of $\EC$ (lemma~\ref{lem:closed} above) and  Theorem~\ref{thm:Dirich-Lowerbound}, together with the fact that
$v_{t_-,t_+}\to v$ in $N^{1,2}(X)$ as $t_-\to 0$ and $t_+\to 1$.
\end{proof}

\bibliography{references-Updated20221030}
\bibliographystyle{amsplain}

\end{document}